\documentclass[12pt]{amsart}

\usepackage{amssymb}

\usepackage[colorlinks,
    linkcolor={red!50!black},
    citecolor={blue!50!black},
    urlcolor={blue!80!black}]{hyperref}

\usepackage{graphicx}
\usepackage{tikz-cd}


\textwidth 6.0in    
\textheight 8.25in
\oddsidemargin.25in    
\evensidemargin.25in     
\footskip=0.25in

\newtheorem{theorem}{Theorem}
\newtheorem{lemma}[theorem]{Lemma}
\newtheorem{proposition}[theorem]{Proposition}
\newtheorem{corollary}[theorem]{Corollary}

\theoremstyle{definition}
\newtheorem{definition}[theorem]{Definition}

\theoremstyle{remark}





\newcommand{\TryPackage}[3]{\IfFileExists{#1.sty}{\usepackage{#1}#2}{#3}}
\TryPackage{mathrsfs}{\renewcommand{\mathcal}{\mathscr}}{%
     \TryPackage{eucal}{}{}}

\newcommand{\lto}{\longrightarrow}
\newcommand{\wh}{\widehat}
\newcommand{\wt}{\widetilde}
\newcommand{\al}{\alpha}

\newcommand{\ga}{\gamma}

\newcommand{\varep}{\varepsilon}
\renewcommand{\rho}{\varrho}

\newcommand{\la}{\lambda}

\newcommand{\si}{\sigma}
\newcommand{\Ga}{\Gamma}

\newcommand{\La}{\Lambda}
\newcommand{\Si}{\Sigma}

\newcommand{\ZZ}{{\mathbb Z}}
\newcommand{\RR}{{\mathbb R}}
\newcommand{\CC}{{\mathbb C}}

\newcommand{\PP}{{\mathbb P}}


\newcommand{\cI}{{\mathcal I}}

\newcommand{\cR}{{\mathcal R}}

\newcommand{\fX}{{\mathfrak X}}

\newcommand{\mer}{{\mathcal M}} 
\newcommand{\lng}{{\mathcal L}}  
\newcommand{\bmu}{{\mu_J}}
\newcommand{{\bla}}{{\lambda_J}}
\newcommand{\SLC}{{SL(2, {\mathbb C})}}



\newcommand{\tr}{\operatorname{\it tr}}

\newcommand{\Spec}{\operatorname{Spec}}

\newcommand{\sm}{{\smallsetminus}}
\newcommand{\del}{{\partial}}

\begin{document}

\title[The SL$(2,\CC)$ Casson invariant for knots and the $\wh{\rm A}$-polynomial]
{The SL(2,\,$\CC$) Casson invariant for knots \\ and the $\wh{\mathbf A}$-polynomial}


\author{Hans U. Boden}
\address{Mathematics \& Statistics, McMaster University, Hamilton, Ontario} 
\curraddr{}
\email{boden@mcmaster.ca}

\author{Cynthia L. Curtis}
\address{Mathematics \& Statistics, The College of New Jersey, Ewing, NJ} 
\curraddr{}
\email{ccurtis@tcnj.edu}
\thanks{}

\subjclass[2010]{Primary: 57M27, Secondary: 57M25, 57M05}
\keywords{Knots; 3-manifolds; character variety; Casson invariant;  $A$-polynomial.}


\dedicatory{}


\date{\today}
\begin{abstract}
In this paper, we extend the definition of the $\SLC$ Casson invariant  
to arbitrary knots $K$ in integral homology 3-spheres and relate it to the $m$-degree of  the  $\wh{A}$-polynomial of $K$. We prove a product formula for the  $\wh{A}$-polynomial of the connected sum $K_1 \# K_2$ of two knots in $S^3$ and deduce additivity of $\SLC$ Casson knot invariant  under connected sum for a large class of knots in $S^3$. We also present an example of a nontrivial knot $K$ in $S^3$ with trivial $\wh{A}$-polynomial and trivial $\SLC$ Casson knot invariant, showing that neither of these invariants detect the unknot.

\end{abstract}

\maketitle


\section*{Introduction}

Given a knot $K \subset \Si$ in an integral homology 3-sphere,  let $M = \Si \sm \tau(K)$ denote the complement of $K$ and let $M_{p/q}$ be the result of $p/q$
-Dehn surgery on $K$. In the case $K$ is a small knot, Theorem 4.8 of \cite{C01} gives a surgery formula for $\la_\SLC (M_{p/q})$, and it follows that the difference
$ \la_\SLC (M_{p/(q+1)}) - \la_\SLC (M_{p/q})$ is independent of $p, q$ provided $p$ and $q$ are relatively prime and 
$q$ is chosen sufficiently large. The $\SLC$ Casson knot invariant is therefore defined for small
knots by setting, for $q \gg 1$,
\begin{equation}\label{smallknotdefn}
\la'_\SLC(K) = \la_\SLC (M_{1/(q+1)}) - \la_\SLC (M_{1/q}).
\end{equation}
In this paper, we present a method for defining the invariant $\la'_\SLC(K)$ more generally for knots in integral homology 3-spheres. 
Unfortunately,  the  surgery formula does not hold for non-small knots; the proof breaks down when $M$ contains a closed essential surface. 
Here, we adopt a different approach and study the asymptotic behavior of $\la_\SLC (M_{p/q})$ as $q \to \infty$, where the limit is taken over all $q$ relatively prime to $p$. As a function in $q$, we prove that $\la_\SLC (M_{p/q})$ has linear growth, and we define $\la'_\SLC(K)$ to be the leading coefficient of $\la_\SLC (M_{p/q})$ as $q \to \infty$.

For small knots, there is a close relationship between the knot invariant  $\la'_\SLC(K)$ and the $m$-degree of the $\wh{A}$-polynomial of $K$,
 which is the $A$-polynomial with multiplicities as defined by Boyer and Zhang in \cite{BZ01}. For instance, in the case of a two-bridge knot $K$, it is known that $\la'_\SLC(K) = \frac{1}{2}\deg_m \wh{A}_K(m,\ell)$, see \cite[Section 3.3]{BC12}.
We extend this relationship to the general setting of knots in homology 3-spheres. We show that the $\wh{A}$-polynomial is multiplicative under connected sums in $S^3$ and deduce additivity of $\la'_\SLC(K)$ under connected sums for most (conjecturally all) knots in $S^3$.  

We conclude the paper with an example of a nontrivial knot $K$ in $S^3$ 
 for which the $\wh{A}$-polynomial  and $\la'_\SLC(K)$ are trivial.
  Thurston classified knots into three types: torus, hyperbolic, and satellite, and for torus and hyperbolic knots, one can show directly that the $m$-degree of the $\wh{A}$-polynomial is nontrivial. Using the relationship between $\la'_\SLC(K)$ and
 the $m$-degree of the $\wh{A}$-polynomial, it then follows that any knot  with $\la'_\SLC(K)=0$ is necessarily a satellite knot.

We therefore consider satellite knots $K$ given as Whitehead doubles, and we examine the $\SLC$ character variety $X(M)$ of the complement $M=S^3 \sm \tau(K)$. We show that for many untwisted doubles,  apart from the component of reducibles, every other component $X_j$ of $X(M)$ has dimension $\dim X_j >1.$  This implies that  $\wh{A}_K(m,\ell) =0$, 
and it shows that the $\wh{A}$-polynomial does not detect the unknot, answering a question raised in \cite{B12}.
Using the relationship between the knot invariant  $\la'_\SLC(K)$ and the $m$-degree of the $\wh{A}$-polynomial of $K$, this implies further that $\la'_\SLC(K)=0$ and
answers the question raised in \cite{BC12} as to whether  the $\SLC$ Casson knot invariant detects the unknot (cf. Theorem 3.3 of \cite{BC12}).

\section{Preliminaries}
In this section, we begin by introducing notation for the $\SLC$ representation spaces and character varieties. We also review the definition of the $\SLC$ Casson invariant and surgery formula from \cite{C01}, as well as the $A$-polynomial of \cite{CCGLS} and the $\wh{A}$-polynomial of \cite{BZ01}.

\subsection{Representations and the character variety} \label{rep-char}
Given a finitely generated group $G$, we set $R(G)$ to be the
space of representations $\rho\colon G \lto \SLC$ and
$R^*(G)$ the subspace of irreducible representations. Recall
from \cite{CS83} that $R(G)$ has the structure of a complex affine
algebraic set. The {\sl character}  of a representation $\rho$ is
the function $\chi_\rho\colon G \lto \CC$ defined by setting
$\chi_\rho(g)=\tr(\rho(g))$ for $g \in G$. The set
of characters of $\SLC$ representations 
 admits the structure of a complex affine algebraic set.
 We denote by $X(G)$ the underlying variety of this algebraic set,
and by  $X^*(G)$ the  variety of characters of
irreducible representations.
Define $t\colon R(G) \lto
X(G)$ by $\rho \mapsto \chi_\rho$, and note that $t$ is
surjective.  

Next, we will define the character scheme ${\fX}(G)$ in terms of the universal character ring. Since $G$ is finitely generated, there exist
elements $g_1,\ldots, g_n \in G$ such that, for any $g \in G$, we have a polynomial $P_{G,g} \in \CC[x_1, \ldots, x_n]$ with the property that $\chi_\rho(g)=P_{G,g}(x_1,\ldots, x_n)$
under the substitutions $x_i =\tr \rho(g_i)$ for all $\rho \colon \Ga \to \SLC$.
This assertion follows easily from the Cayley-Hamilton theorem if $G$ is the free group $F_k$ of rank $k$, and in general, 
using a presentation for $G$ to write it as the quotient of $F_k$, we define
$\cR(G)=\CC[x_1, \ldots, x_n]/\cI(G)$, where $\cI(G)=\{P_{F_k,g} \mid g \in \ker(F_k \to G) \}$. The ring $\cR(G)$ can be shown to be independent of the choice of presentation of $G$ and is called the \emph{universal character ring}. The character scheme is defined as ${\fX}(G)=\Spec \cR(G)$ and is said to be \emph{reduced} if $\cR(G)$ contains no nonzero nilpotent elements, or equivalently if $\cI(G)=\sqrt{\cI(G)}$ is a radical ideal.

For a manifold $M$, we set
$R(M) =R(\pi_1(M))$ and $X(M) = X(\pi_1(M))$ for the spaces of representations and character variety; and $\cR(M) = \cR(\pi_1(M))$ and $\fX(M) = \fX(\pi_1(M))$ for the universal character ring and character scheme.
We will be mainly interested in the case when $M$ is a compact 3-manifold with boundary a torus; typically $M$ will be the complement $\Si \sm \tau(K)$ of a knot $K$ in an integral homology 3-sphere $\Si$. In any case, it is well known that every component $X_j$ of $X(M)$ has $\dim X_j \geq 1,$ see \cite[Proposition 2.4]{CCGLS}. In the case $X_j$ is a curve, there is a smooth projective curve $\wt{X}_j $ and a birational equivalence $\wt{X}_j \to X_j$, and we refer to points $\hat{x} \in \wt{X}_j$  where $\wt{X}_j \to X_j$ has a pole as \emph{ideal points}. Notice that the set of ideal points is Zariski closed and hence finite.   

\subsection{The SL(2,\,$\CC$) Casson invariant}
We briefly recall the definition of the $\SLC$ Casson invariant.
Suppose $\Si$ is a closed, orientable 3-manifold
with a Heegaard splitting $(W_1, W_2, S)$. Here, $S$
is a closed orientable surface embedded in $\Si$, and $W_1$ and $W_2$ are handlebodies
with boundaries $\partial W_1 = S = \partial W_2$
 such that $\Si = W_1\cup_S W_2$.
The inclusion maps $S \hookrightarrow W_i$ and $W_i \hookrightarrow \Si$
induce surjections of fundamental groups. On the level of character varieties, this identifies $X(\Si)$  as the intersection
$$X(\Si) = X(W_1) \cap X(W_2) \subset X(S).$$

There are natural orientations on all the character varieties
determined by their complex structures. The  invariant
$\la_\SLC(\Si)$ is defined as an oriented intersection number of
$X^*(W_1)$ and $X^*(W_2)$ in $X^*(S)$ which counts only compact,
zero-dimensional components of the intersection. Specifically,
there exist a compact neighborhood $U$ of the zero-dimensional
components of $X^*(W_1)\cap X^*(W_2)$ which is disjoint from the
higher dimensional components of the intersection and an isotopy
$h\colon X^*(S) \to X^*(S)$ supported in $U$ such that
$h(X^*(W_1))$ and $X^*(W_2)$ intersect transversely in $U$.
Given a zero-dimensional component $\{\chi\}$ of
$h(X^*(W_1))\cap X^*(W_2)$, we set $\varep_\chi = \pm 1$,
depending on whether the orientation of $h(X^*(W_1))$  followed by
that of $X^*(W_2)$ agrees with or disagrees with the orientation
of $X^*(S)$ at $\chi$.

\begin{definition} Let
$\la_\SLC(\Si) = \sum_\chi \varep_\chi,$
where the sum is  over all zero-dimensional
components  of the intersection $h(X^*(W_1))\cap X^*(W_2)$.
\end{definition}

\subsection{The surgery formula for small knots} 
In this subsection, we recall from \cite{C01} the  surgery formula for the Casson $\SLC$ invariant for Dehn surgeries on small knots in integral homology 3-spheres. 
 
Given a compact, irreducible, orientable 3-manifold $M$ with
boundary a torus, an {\sl incompressible surface} in $M$ is a
properly embedded surface $(S,\partial S) \hookrightarrow (M,\partial M)$ such
that $\pi_1(S) \lto \pi_1(M)$ is injective and no component of $S$ is
a 2-sphere bounding a 3-ball. The surface $S$ is {\sl essential} if
it is incompressible and has no boundary parallel components. 
A 3-manifold is called {\sl small} if it 
does not contain a closed essential surface, and a knot $K$ in
$\Si$ is called {\sl small} if its complement $\Si \sm
\tau(K)$ is a small manifold.

If $\ga$ is a simple closed curve in $\partial M$, let $M_\ga$ be the Dehn
filling of $M$ along $\ga$; it is the
closed 3-manifold obtained by identifying a solid torus with $M$
along their boundaries so that $\ga$ bounds a disk. Note that the
homeomorphism type of $M_\ga$ depends only on the {\sl slope} of
$\ga$ -- that is, the unoriented isotopy class of $\ga$. Primitive
elements in $H_1(\partial M; \ZZ)$ determine slopes
under a two-to-one correspondence.

If $S$ is an essential surface in $M$
with nonempty boundary, then all of its  boundary
components are parallel and the slope of one (and hence all) of these
curves is called the {\sl boundary slope} of $S$. A
slope is called a {\sl strict boundary slope} if it is
the boundary slope of an essential surface that is not the
fiber of any fibration of $M$ over $S^1$.

For $\ga \in \pi_1(M),$ there is a regular map $I_\ga\colon X(M)
\lto \CC$ defined by $I_\ga(\chi) = \chi(\ga).$ Let
$e\colon H_1(\partial M;\ZZ)\lto \pi_1(\partial M)$ be the
inverse of the Hurewicz
isomorphism.  Identifying $e(\xi) \in \pi_1(\partial M)$ with its
image in $\pi_1(M)$ under $\pi_1(\partial M) \lto
\pi_1(M),$ we obtain a well-defined function $I_{e(\xi)}$ on $X(M)$
for $\xi \in H_1(\partial M; \ZZ).$ Let $f_\xi\colon X(M) \lto \CC$
be the regular function defined by $f_\xi  = I_{e(\xi)} -2$ for
$\xi \in H_1(\partial M; \ZZ).$

For any algebraic
component $X_j$ of $X(M)$ with $\dim X_j =1$,
let $f_{j,\xi}\colon X_j \lto \CC$ be the regular
function obtained by restricting $f_{\xi}$ to $X_j$.
Let $\wt{X}_j$ denote the smooth, projective curve
birationally equivalent to $X_j$. Regular functions on $X_j$
extend to rational functions on $\wt{X}_j$, and we abuse
notation and use $f_{j,\xi}$ also for the extension  $f_{j, \xi}\colon \wt{X}_j \lto \CC
\cup \{ \infty \} = \CC \PP^1.$

 \begin{definition} \label{CS-seminorm}
 Let $r\colon X(M)\lto X(\partial M)$ be the restriction map.
Given a one-dimensional component $X_j$ of
$X(M)$ containing an irreducible character such that $r(X_j)$ is also one-dimensional, 
define
the seminorm  $\| \cdot \|_{j}$ on $H_1(\partial M; \RR)$   by setting
$$ \| \xi \|_j =  \deg(f_{j,\xi})$$
for all $\xi \in H_1(\partial M; \ZZ)$.
We refer to $\| \cdot \|_{j}$ as the {\em Culler--Shalen semi-norm}
associated to $X_j$, and we say $X_j$ is   a {\sl norm curve}
if $\| \cdot \|_j$ defines a norm on $H_1(\partial M; \RR)$.
\end{definition}

Note that if $M$ is hyperbolic, then any algebraic component $X_0$ of $X(M)$ containing the character $\chi_{\rho_0}$ of a discrete faithful irreducible representation $\rho_0 \colon \pi_1(M) \to \SLC$ is a norm curve, see \cite[Section 1.4]{CGLS}.

If $M$ is the complement of a small knot $K$, the $\SLC$ Casson invariant of a Dehn filling is closely related to this semi-norm; however we must impose certain
restrictions on the  slope of the Dehn filling.

\begin{definition} \label{irregular}
The slope of a simple closed curve $\ga$ in $\partial M$
is called {\sl irregular} if there exists an irreducible representation
$\rho\colon \pi_1(M) \lto \SLC$ such that
\begin{itemize}
 \item [(i)] the character $\chi_{\rho}$ of $\rho$ lies on a one-dimensional
component $X_j$ of $X(M)$ such that $r(X_j)$ is one-dimensional,
 \item [(ii)] $\tr \rho (\al) =\pm 2$ for all
 $\al$ in the image of $i^*\colon \pi_1(\partial M) \lto \pi_1(M),$
 \item[(iii)] $\ker (\rho \circ i^*)$ is the cyclic group generated by $[\ga] \in \pi_1(\partial M)$.
\end{itemize}
A slope is called {\sl regular} if it is not irregular.
\end{definition}

If $M$ is the complement of a knot $K$ in an integral homology
3-sphere $\Si$, then the meridian $\mer$ and longitude $\lng$ of the knot $K$
provide a preferred basis for $H_1(\partial M; \ZZ).$ We say that the curve $\ga = p \mer + q \lng$ has slope $p/q$ and denote by $M_{p/q} = M_\ga$ the 3-manifold obtained by  $p/q$--Dehn surgery along $K$.

\begin{definition}
A slope $p/q$ is called {\sl admissible} for $K$ if
\begin{enumerate}
\item[(i)] $p/q$ is a regular slope which is not a strict boundary slope, and
\item[(ii)] no $p'$-th root of unity is a root of the Alexander polynomial of
$K$, where $p'=p$ if $p$ is odd and $p' = p/2$ if $p$ is even.
\end{enumerate}
\end{definition}

The next result is a restatement of Theorem 4.8 of \cite{C01}, as
corrected in \cite{C03}.
\begin{theorem} \label{surg-form}
Suppose $K$ is a small knot in an integral homology 3-sphere $\Si$ with
complement $M$. Let $\{X_j\}$ be the collection of all
one-dimensional components of the character variety $X(M)$ such
that $r(X_j)$ is one-dimensional and such that $X_j \cap X^*(M)$
is nonempty. Define $\si\colon \ZZ \lto \{0,1\}$ by $\si(p) \equiv p \mod 2$. 

Then there exist integral weights $m_j > 0$ depending only on
$X_j$ and non-negative numbers $E_0, E_1 \in \frac{1}{2}
\ZZ$ depending only on $K$ such that for every admissible slope
$p/q$, we have
$$
\la_\SLC (M_{p/q}) = \frac{1}{2} \sum_j m_j \| p\mer + q \lng \|_{j} - E_{\si(p)}.
$$
\end{theorem}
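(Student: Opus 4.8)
The plan is to read off the $\SLC$ Casson invariant of $M_{p/q}$ from the Heegaard-splitting definition, translate the count of isolated irreducible characters into a statement about zeros of the function $f_{\xi}$ on the curves $X_j$, and then account for the discrepancy by constants depending only on $K$. First I would use the fact that an irreducible $\SLC$ representation of $\pi_1(M_{p/q}) = \pi_1(M)/\langle\langle p\mer + q\lng \rangle\rangle$ is exactly an irreducible representation $\rho$ of $\pi_1(M)$ with $\rho(p\mer+q\lng) = I$; on the level of characters this forces $\chi_\rho$ into the zero locus of $f_{\xi}$ for $\xi = p\mer+q\lng$. Since $K$ is small, every component of $X(M)$ carrying an irreducible character is a curve, so each isolated point of $h(X^*(W_1)) \cap X^*(W_2)$ in the Heegaard picture for $M_{p/q}$ lies on one of the curves $X_j$ with $r(X_j)$ one-dimensional. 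Passing to the smooth projective model $\wt X_j$ and counting zeros with multiplicity of the extension $f_{j,\xi}\colon \wt X_j \to \CC\PP^1$ gives the total $\deg(f_{j,\xi}) = \| p\mer + q\lng \|_j$ by Definition \ref{CS-seminorm}. From this total one must subtract the zeros that do not produce genuine isolated irreducible characters of $M_{p/q}$: zeros at ideal points of $\wt X_j$, zeros at characters of reducible representations of the filling, and zeros at which $\rho(p\mer+q\lng) = -I$ rather than $I$ (the latter dichotomy is governed by the parity of $p$, which is the source of the index $\si(p)$). The admissibility hypotheses are precisely what controls these exceptional contributions: regularity rules out the parabolic boundary representations of Definition \ref{irregular}, the non-strict-boundary-slope condition controls ideal-point contributions via Culler--Shalen theory, and the Alexander polynomial condition handles the abelian characters of $M_{p/q}$.

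Next I would organize the bookkeeping so that each admissible curve $X_j$ enters with a positive integer weight $m_j$, coming from the generic degree of the character map $t$ along $X_j$ together with the local intersection multiplicity in the Heegaard picture, and a uniform factor of $\tfrac12$ reflecting the two-to-one identification of primitive homology classes with slopes (equivalently, the pair $\pm\rho(p\mer+q\lng)$). The key sign analysis is to show every surviving isolated intersection contributes $\varep_\chi = +1$: after the supported isotopy $h$ the intersection $h(X^*(W_1))\cap X^*(W_2)$ is transverse, and since all the character varieties carry their complex orientations, each local intersection number is the local multiplicity of a map of complex varieties, hence positive; thus the count is genuinely the sum of local degrees of the $f_{j,\xi}$ and not a signed count with cancellation. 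Finally one must check that the leftover terms assemble into constants $E_0, E_1 \in \tfrac12\ZZ$ depending only on $K$: the point is that the contributions of ideal points of the $\wt X_j$, their multiplicities, and the reducible contribution are insensitive to the particular admissible slope once the parity of $p$ is fixed — and here smallness is used a second time, since for non-small $M$ the ideal-point analysis breaks down when $M$ contains a closed essential surface.

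The main obstacle is exactly this last point: proving that the correction terms are genuinely independent of the admissible slope. This requires a careful Culler--Shalen analysis at each ideal point of each $\wt X_j$ — showing that an ideal point either detects a strict boundary slope (excluded by admissibility) or contributes to $f_{j,\xi}$ in a way that does not depend on $p/q$, and simultaneously that no new isolated irreducible characters of $M_{p/q}$ arise near ideal points for admissible slopes. A secondary difficulty is the transversality-and-sign argument in the Heegaard picture: one must choose the isotopy $h$ compatibly with the complex structures so that the isolated-point count really equals $\sum_j m_j \deg(f_{j,\xi})$ up to the correction, and the nondegeneracy encoded in the admissibility conditions is what makes this possible. (I note that the statement as given is identified in the text as a restatement of Theorem~4.8 of \cite{C01}, corrected in \cite{C03}, so the argument above is a sketch of that proof rather than a new one.)
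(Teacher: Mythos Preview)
The paper does not prove this theorem: it is stated explicitly as ``a restatement of Theorem~4.8 of \cite{C01}, as corrected in \cite{C03},'' with no proof supplied. You correctly note this at the end of your proposal. There is therefore nothing in the paper to compare your sketch against.

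That said, your outline is a reasonable summary of the strategy of the original Curtis proof, and the paper's later proof of Theorem~\ref{thm-main} corroborates several of your ingredients: the contribution of a point to $\la_\SLC(M_{p/q})$ is written there as $\sum_j d_j\, i_{X_j,x}$ with $d_j = \deg(r|_{X_j})$ (your ``generic degree'' weight), the factor $\tfrac12$ is attributed to the two-to-one map $t\colon \La \to X(\partial M)$, and the exceptional points are sorted into exactly the three types you list (central boundary values, Alexander-polynomial roots, ideal points). One small correction to your sketch: the weight $m_j$ in Curtis's formula is not merely the degree of the character map $t$ but incorporates the degree $d_j$ of the restriction $r|_{X_j}$ together with a scheme-theoretic multiplicity $n_{Y_j}$ (which the present paper sets to $1$ under hypothesis $(*)$); your description of $m_j$ is slightly off on this point.
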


We briefly recall some useful properties of the $\SLC$ Casson invariant and
 we refer to \cite{C01} and \cite{BC06} for further details.

On closed 3-manifolds $\Si,$ the invariant $\la_\SLC(\Si) \geq 0$ is nonnegative, satisfies $\la_\SLC(-\Si) = \la_\SLC(\Si)$ under
orientation reversal, and is additive  under connected sum  of $\ZZ/2$--homology 3-spheres (cf.~Theorem 3.1, \cite{BC06}). If $\Si$ is hyperbolic, then $\la(\Si) > 0$
by Proposition 3.2 of \cite{C01}.

If $K$ is a small knot
in an integral homology 3-sphere $\Si$, then
Theorem \ref{surg-form} implies that  the difference
$\frac{1}{p}(\la_\SLC (M_{p/(p+q)}) - \la_\SLC (M_{p/q}))$
is independent of $p$ and $q$ provided $p$ and $q$ are relatively prime and $q$ is chosen sufficiently large.
This allows one to define an  invariant of small knots $K$ in homology 3-spheres by setting
\begin{equation}\label{knotinv}
\la'_\SLC(K) = \la_\SLC (M_{1/(q+1)}) - \la_\SLC (M_{1/q})
\end{equation}
for $q$ sufficiently large.

\subsection{The $\wh{A}$--polynomial}

We begin with the definition of the $A$-polynomial $A_K(m,\ell)$  from \cite{CCGLS} (see also \cite{CL96,CL97}). Given a knot
$K$ in a homology 3-sphere $\Si$, let $M = \Si \sm \tau(K)$ be its complement and choose  a standard meridian-longitude pair $(\mer,\lng)$ for $\pi_1(\partial M)$. Set
$$\La = \{ \rho\colon\pi_1(\partial M) \lto \SLC \mid \text{$\rho(\mer)$ and  $\rho(\lng)$ are diagonal matrices} \}$$
and define the eigenvalue map  $\La \lto \CC^* \times \CC^*$ by setting $\rho \mapsto (u,v) \in \CC^* \times \CC^*$, where $$\rho(\mer) = \begin{pmatrix}u& 0 \\ 0 &u^{-1}\end{pmatrix} \text{ and }\rho(\lng) = \begin{pmatrix}v& 0 \\ 0 &v^{-1}\end{pmatrix}.$$
This map identifies $\La$ with  $\CC^* \times \CC^*$, and the natural projection $t\colon \La \lto X(\partial M)$ is a degree 2, surjective, regular map.

The natural inclusion $\pi_1(\partial M) \lto \pi_1(M)$ induces a map $r\colon X(M) \lto X(\partial M)$, which is regular. We define $V \subset X(\partial M)$ to be the Zariski closure of the union of the images $r(X_j)$ over each component $X_j \subset X(M)$ which contains an irreducible character and for which $r(X_j)$ is one-dimensional, and we set
$D \subset \CC^2 $ to be the Zariski closure of the algebraic curve $t^{-1}(V) \subset \La,$ where we  identify $\La$ and  $\CC^* \times \CC^*$ via the eigenvalue map.
The $A$-polynomial $A_K(m,\ell)$ is just the defining polynomial of $D \subset \CC^2$;
it is well-defined up to sign by requiring it to have integer coefficients with greatest common divisor one and to have no repeated factors. 
Some authors include the factor $\ell-1$ of reducible characters in $A_K(m,\ell)$, but our convention is to only include components $X_j\subset X(M)$ which contain irreducible characters. Thus $\ell-1$ is a factor of $A_K(m,\ell)$ if and only if there is a component $X_j \subset X(M)$ containing an irreducible character whose restriction $r(X_j) \subset X(\partial M)$ is the curve $\ell - 1$. 

In  \cite{BZ01}, Boyer and Zhang define an $A$-polynomial $\wh{A}_{K, X_j}(m,\ell)$ for each one-dimensional component $X_j$ of $X(M)$ for which $r(X_j)$ is one-dimensional.  (Although Boyer and Zhang assume $X_j$ is a norm curve in this definition, the approach works for any one-dimensional component $X_j$ of $X(M)$.)
Their definition takes the defining polynomial $\wh{A}$ to have factors with multiplicities given by the degree of the restriction map $r|_{X_j}$ rather than requiring that the polynomial have no repeated factors.
 Taking the product
$$\wh{A}_K(m,\ell) =  \wh{A}_{K, X_1}(m,\ell) \cdots \wh{A}_{K, X_n}(m,\ell)$$ over one-dimensional components $X_j$ of $X(M)$ different from the component of reducibles, and this gives an alternative version of the $A$-polynomial  that includes factors with multiplicities. Note that only one-dimensional components $X_j$ of $X(M)$ with one-dimensional image $r(X_j)$ contribute to $\wh{A}_K(m,\ell).$ 
As with the $A$-polynomial, by convention the component of reducible characters does not contribute to $\wh{A}_K(m,\ell).$
For small knots, it is not difficult to check that $A_K(m,\ell)$ and $\wh{A}_K(m,\ell) $ have the same factors, only that $\wh{A}_K(m,\ell)$ may include some repeated factors. 

\section{Main Results}
In this section, we give a general definition of the $\SLC$ Casson knot invariant for knots and relate it to the $m$-degree of the $\wh{A}$-polynomial. We prove product formulas for both invariants under the operation of connected sum, and we use Whitehead doubling to construct examples of knots whose character variety contains only components $X_j$ of dimension $\dim X_j > 1.$ 
These knots are nontrivial but have trivial $ \wh{A}$-polynomial and trivial $\SLC$ Casson knot invariant.

We are particularly interested in knots $K$ in integral homology 3-spheres $\Si$ for which satisfy the following property, where $M =  \Si \sm \tau(K)$ is the complement of $K$ in $\Si$:

\medskip \noindent
$(*)$ \hspace{2.6cm} The character scheme of $\fX(M)$ is reduced.
\medskip

Note that $(*)$ is equivalent to the condition that the universal character ring $\cR(M)$ is reduced,
and in \cite{LT11} the authors explain its relationship to the AJ conjecture.
For instance, in \cite[Conjecture 2]{LT11}, Le and Tran conjecture
that $(*)$ holds for all knots in $S^3$, and they point out that it has been verified in numerous cases, including two-bridge knots \cite{L93}, torus knots \cite{M10}, and many pretzel knots \cite{LT11, T13}. 
On the other hand, in \cite{KM} Kapovich and Millson have proved a kind of Murphy's law for 3-manifold groups which implies that there are 3-manifolds whose character schemes are not reduced; thus $(*)$ does not hold in general. 
 
\subsection{The SL(2,\,$\CC$) Casson invariant for knots}
In this subsection, we extend the $\SLC$ Casson knot invariant to knots in integral homology 3-spheres satisfying $(*)$.

\begin{theorem} \label{thm-main}
For any knot $K$ in an integral homology 3-sphere satisfying $(*)$, the limit ${\displaystyle  \lim_{q \to \infty}} \; \tfrac{1}{q} \la_\SLC(M_{p/q})$ exists, is independent of $p$, and equals $\frac{1}{2} \deg_m \wh{A}_K(m,\ell)$.
\end{theorem}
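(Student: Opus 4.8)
The plan is to express $\la_\SLC(M_{p/q})$, up to an error bounded uniformly in $q$, as a signed count of points localized on the one-dimensional components of $X(M)$, and then to read off the leading term from the Newton polygon of $\wh{A}_K$. First I would set up the localization. By the Heegaard-splitting definition, $\la_\SLC(M_{p/q})$ is a signed count of the \emph{isolated} points of $X^*(M_{p/q})$, and $X^*(M_{p/q})$ sits inside $X^*(M)$ as the locus of irreducible characters on which the regular function $I_{e(p\mer+q\lng)}$ takes the value $2$ (modulo the usual $\pm I$ and parabolic ambiguity). Since every component of $X(M)$ has dimension at least one, an isolated point of $X^*(M_{p/q})$ must lie on a one-dimensional component $X_j$ of $X(M)$ with $r(X_j)$ one-dimensional: on a component of dimension $\ge 2$, or on one with $r(X_j)$ a point, the single equation $f_{p\mer+q\lng}=0$ cannot cut the dimension down to zero. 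So for $q$ large the isolated points of $X^*(M_{p/q})$ are distributed among the finitely many curves $X_j$ appearing in Theorem~\ref{surg-form}, plus a contribution concentrated near the reducibles that is bounded uniformly in $q$ (controlled, as in the admissibility condition, by roots of the Alexander polynomial of $K$).

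Next I would analyze each curve $X_j$ in isolation. On the smooth model $\wt{X}_j$, the non-ideal zeros of $f_{j,p\mer+q\lng}=I_{e(p\mer+q\lng)}-2$ account, with signs and exactly as in the proof of Theorem~\ref{surg-form}, for the isolated points of $X^*(M_{p/q})$ lying on $X_j$; since $f_{j,p\mer+q\lng}$ has the local form $u+u^{-1}-2$ in an eigenvalue $u$ of $p\mer+q\lng$, it generically vanishes to even order, so the signed total is $\tfrac{1}{2}\|p\mer+q\lng\|_j+O(1)$, the $O(1)$ being uniform in $q$ and absorbing the ideal points and the $\pm I$ bookkeeping. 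In particular, ideal points detecting closed essential surfaces contribute only an $O(1)$ correction, which is why smallness of $K$ is not needed here: a closed essential surface obstructs the \emph{global} surgery formula, not the contribution of an individual curve. Summing, $\la_\SLC(M_{p/q})=\tfrac{1}{2}\sum_j\|p\mer+q\lng\|_j+O(1)$ as $q\to\infty$ over $q$ coprime to $p$. It then remains to recall from \cite{CCGLS,BZ01} that $\|p\mer+q\lng\|_j$ is the width of the Newton polygon $N_j$ of $\wh{A}_{K,X_j}(m,\ell)$ in the direction $(q,-p)$, i.e.\ $\max\{qi-pk:(i,k)\in N_j\}-\min\{qi-pk:(i,k)\in N_j\}$. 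Dividing by $q$ and letting $q\to\infty$ with $p$ fixed, the $p$-dependent terms drop out and the limit is the horizontal width of $N_j$, namely $\deg_m\wh{A}_{K,X_j}$. Since $\wh{A}_K=\prod_j\wh{A}_{K,X_j}$, summing over $j$ gives $\lim_{q\to\infty}\tfrac{1}{q}\la_\SLC(M_{p/q})=\tfrac{1}{2}\sum_j\deg_m\wh{A}_{K,X_j}=\tfrac{1}{2}\deg_m\wh{A}_K(m,\ell)$, which exists, is independent of $p$, and agrees with \eqref{knotinv} when $K$ is small.

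The main obstacle is the content of the previous two paragraphs: proving that $\la_\SLC(M_{p/q})$ really equals, up to a uniformly bounded error, the \emph{unweighted} localized count $\tfrac{1}{2}\sum_j\|p\mer+q\lng\|_j$. Two points are delicate. First, one must rule out that the higher-dimensional components of $X(M)$, or the reducible locus, contribute isolated irreducible characters to $X^*(M_{p/q})$ in an unbounded way as $q\to\infty$. Second, one must show that each isolated character arising from $X_j$ is counted in $\la_\SLC$ with multiplicity one; equivalently, that the integral weights $m_j$ of Theorem~\ref{surg-form} all equal $1$ for the components relevant here. This is exactly where the hypothesis $(*)$ enters: reducedness of $\fX(M)$ guarantees that the scheme cut out by the Dehn-filling relation carries the expected multiplicities, so that no nilpotent ``phantom'' contributions appear and the error term genuinely stays $O(1)$; without $(*)$ one cannot exclude such contributions, and the leading coefficient can change. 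I expect the bulk of the work to be a dimension count together with a transversality argument valid for $q\gg 0$, modeled on but genuinely extending the analysis of \cite{C01,C03}.
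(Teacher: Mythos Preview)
Your outline is essentially the paper's approach: localize to one-dimensional components $X_j$ with $r(X_j)$ one-dimensional (higher-dimensional components cannot contribute isolated points, by a dimension count the paper attributes to Shafarevich), invoke $(*)$ to force the weights $m_j$ of Theorem~\ref{surg-form} to equal $1$, and show the remaining discrepancy is bounded independently of $q$. The packaging differs only in that you phrase the localized count as $\tfrac{1}{2}\sum_j\|p\mer+q\lng\|_j$ and read the limit off the Newton polygons of the $\wh{A}_{K,X_j}$, whereas the paper works with the plane-curve intersection number $\tfrac{1}{2}\sum_x I_x(\wh{A}_K\cap F_{p/q})$, where $F_{p/q}$ is the surgery curve $m^p\ell^q=1$, and computes the limit by parametrizing $F_{p/q}$ as $(m,\ell)=(t^q,t^{-p})$ and counting roots of the resulting one-variable polynomial. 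These are equivalent via the Boyer--Zhang identification of $\|\cdot\|_j$ with Newton-polygon width.

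The one step you correctly flag as the obstacle, and leave unexecuted, is the uniformity of the $O(1)$. The paper's device is a short lemma: for distinct slopes the curves $F_{p/q}$ are nonsingular and pairwise transverse in $\CC^*\times\CC^*$. The finitely many ``bad'' points (your $\pm I$ bookkeeping, the Alexander-polynomial reducibles, and images of ideal points---the paper labels these types (1), (2), (3)) can each lie on infinitely many surgery curves, but by transversality only finitely many $F_{p/q}$ meet $\wh{A}_K$ non-transversally at a given bad point $x$; taking $B_x$ to be the maximum intersection multiplicity over that finite list and summing gives the uniform bound $B$. This is exactly the ``transversality argument valid for $q\gg 0$'' you anticipate, and it is the only ingredient beyond \cite{C01} needed to finish.
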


We then define the $\SLC$ Casson knot invariant by setting 
\begin{equation} \label{gen-def}
\la'_\SLC(K) = {\displaystyle \lim_{q \to \infty}} \; \tfrac{1}{q} \la_\SLC(M_{p/q}).
\end{equation}
Here $p$ is fixed, and the limit is taken over all $q$ relatively prime to $p$.
The theorem implies that this gives a well-defined invariant of knots. As a direct consequence of Theorem 
\ref{thm-main}, we deduce:

\begin{corollary} \label{corollary-casson}
For any knot $K$ in an integral homology 3-sphere satisfying $(*)$, we have $\la'_\SLC(K) = \frac{1}{2} \deg_m \wh{A}_K(m,\ell)$. 
\end{corollary}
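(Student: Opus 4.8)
The plan is to derive Corollary~\ref{corollary-casson} immediately from Theorem~\ref{thm-main} together with the definition~\eqref{gen-def}. By \eqref{gen-def}, $\la'_\SLC(K)$ is \emph{defined} to be $\lim_{q\to\infty}\tfrac1q\la_\SLC(M_{p/q})$ for a fixed $p$, the limit being over $q$ coprime to $p$; and Theorem~\ref{thm-main} asserts exactly that this limit exists, is independent of the choice of $p$ (so the definition is legitimate), and has value $\tfrac12\deg_m\wh A_K(m,\ell)$. Concatenating these two statements yields the asserted equality with nothing left to verify. Thus all of the substance sits in Theorem~\ref{thm-main}, whose proof I would have completed beforehand; I sketch below how I would attack that theorem, since that is where the real work — and the hypothesis $(*)$ — comes in.

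For Theorem~\ref{thm-main} I would proceed in three stages. First, when $K$ is small, invoke Theorem~\ref{surg-form}: for admissible slopes $p/q$ we have $\la_\SLC(M_{p/q})=\tfrac12\sum_j m_j\|p\mer+q\lng\|_j-E_{\si(p)}$, and since the Culler--Shalen seminorm is piecewise linear, $\|p\mer+q\lng\|_j$ is eventually linear in $q$ with slope $\|\lng\|_j$; dividing by $q$ and letting $q\to\infty$ kills the bounded error $E_{\si(p)}$ and leaves $\tfrac12\sum_j m_j\|\lng\|_j$, manifestly independent of $p$. (One must also check that admissible slopes $p/q$ are cofinal among $q$ coprime to $p$, which follows from the conditions in the definition of admissibility.) Second, for non-small $K$, where Theorem~\ref{surg-form} is unavailable because $M$ may contain a closed essential surface, I would study $\la_\SLC(M_{p/q})$ directly through the Heegaard intersection-theoretic definition, showing that the count of zero-dimensional components again grows linearly in $q$ with leading term governed by the same one-dimensional components $X_j\subset X(M)$ having $r(X_j)$ one-dimensional, and that the presence of closed essential surfaces affects only lower-order terms. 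Third, I would identify $\tfrac12\sum_j m_j\|\lng\|_j$ with $\tfrac12\deg_m\wh A_K(m,\ell)$: by the Newton-polygon description of the $A$-polynomial and Boyer--Zhang's construction of $\wh A_{K,X_j}$ in \cite{BZ01}, the $m$-degree of $\wh A_{K,X_j}$ is the contribution of $X_j$ to the seminorm value $\|\lng\|_j$, taken with the multiplicity $\deg(r|_{X_j})$, so taking the product over all $X_j$ different from the reducibles turns $\sum_j m_j\|\lng\|_j$ into $\deg_m\wh A_K$.

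The hypothesis $(*)$ — reducedness of $\fX(M)$ — is needed to guarantee that the local intersection multiplicities entering $\la_\SLC$ are the expected ones, with no inflation from nonreduced scheme structure, so that the surgery weights $m_j$ agree with the restriction degrees $\deg(r|_{X_j})$ used to define $\wh A_{K,X_j}$; without it the two sides of the desired equality could differ. I expect the main obstacle to be the second stage: controlling the asymptotics of $\la_\SLC(M_{p/q})$ as $q\to\infty$ in the absence of the surgery formula, i.e.\ bounding the contribution of closed essential surfaces and of higher-dimensional components of $X(M)$ to the intersection count, and establishing genuinely linear (rather than merely sub-quadratic) growth with the predicted leading coefficient. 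Once that asymptotic is in hand, matching its leading term against $\deg_m\wh A_K$ is essentially bookkeeping with the definitions of the $A$- and $\wh A$-polynomials, and Corollary~\ref{corollary-casson} then follows by simply reading off \eqref{gen-def}.
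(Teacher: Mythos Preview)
Your deduction of the corollary from Theorem~\ref{thm-main} and definition~\eqref{gen-def} is correct and is exactly what the paper does: the corollary is stated as ``a direct consequence of Theorem~\ref{thm-main}'' with no further argument.

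Your sketch of Theorem~\ref{thm-main}, however, diverges from the paper's actual proof. The paper does not split into small and non-small cases, and it does not pass through the Culler--Shalen seminorm $\|\lng\|_j$. Instead it introduces the \emph{surgery curve} $F_{p/q}=\{m^p\ell^q=1\}$ in $\CC^*\times\CC^*$ and compares $\la_\SLC(M_{p/q})$ directly with the plane-curve intersection number $\tfrac12\sum_x I_x(\wh A_K\cap F_{p/q})$. It shows these agree pointwise except at a finite set of exceptional points (those with $m,\ell=\pm1$; those with $\ell=1$ and $m^2$ a root of $\Delta_K$; images of ideal points), and that the total discrepancy is bounded by a constant $B$ independent of $p,q$. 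A short lemma then parametrizes $F_{p/q}$ by $(m,\ell)=(t^q,t^{-p})$ and counts roots to obtain $\lim_{q\to\infty}\tfrac1q\sum_x I_x(\wh A_K\cap F_{p/q})=\deg_m\wh A_K$, and the theorem follows by squeezing. Higher-dimensional components of $X(M)$ are dispatched in one line (they contribute no zero-dimensional components to $X(M_{p/q})$, hence nothing to either side), so small and non-small knots are handled uniformly. Your reading of hypothesis~$(*)$ is accurate: it forces the multiplicities $n_{Y_j}$ in Proposition~4.3 of \cite{C01} to equal one, so that the local Casson contribution matches $d_j\cdot I_x(E_j\cap F_{p/q})$. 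What the paper's approach buys is precisely the avoidance of your ``second stage'': there is no need to analyze the Heegaard intersection asymptotics directly in the non-small case, because the comparison with $\wh A_K\cap F_{p/q}$ already works at the level of individual intersection points.
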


The rest of this subsection is devoted to proving Theorem \ref{thm-main}, and we begin with a definition.

For any representation $\varrho\colon \pi_1(M) \to \SLC$ that extends over $p/q$--Dehn surgery, the eigenvalues $m,\ell$ of $\varrho(\mer), \varrho(\lng)$ satisfy $m^p \ell^q = 1.$ So for $p,q$ relatively prime, we define $F_{p/q}$ to be the plane curve  given by $m^p \ell^q - 1$ and call $F_{p/q}$ the \emph{surgery curve}. 
 
\begin{lemma} \label{useful-lemma}
For any slope $p/q$, the surgery curve $F_{p/q}$ is non-singular. If $p/q$ and $p'/q'$ are distinct slopes, then $F_{p/q}$ and $F_{p'/q'}$ are transverse. 
\end{lemma}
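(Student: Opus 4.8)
The plan is to work in the $(m,\ell)$-plane $\CC^2$ and analyze the curves $F_{p/q} = \{m^p\ell^q = 1\}$ directly, since they are given by a single monomial equation. First I would establish non-singularity: on $F_{p/q}$ we have $m^p\ell^q = 1$, so in particular $m \neq 0$ and $\ell \neq 0$ at every point of the curve, i.e. $F_{p/q} \subset \CC^* \times \CC^*$. The defining polynomial is $f(m,\ell) = m^p\ell^q - 1$ (when $p, q \geq 0$; the general case with possibly negative exponents is handled by clearing denominators, or better by noting $F_{p/q}$ is literally the subvariety of the torus $(\CC^*)^2$ cut out by $m^p\ell^q = 1$, which makes sense for all coprime $p,q$). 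The partials are $f_m = p\, m^{p-1}\ell^q$ and $f_\ell = q\, m^p \ell^{q-1}$. At a point of $F_{p/q}$, since $m, \ell \in \CC^*$, these vanish only if $p = 0$ and $q = 0$ respectively; but $(p,q)$ is a nonzero coprime pair, so $f_m$ and $f_\ell$ cannot both vanish. Hence the Jacobian has rank one everywhere on $F_{p/q}$, and the curve is non-singular. (One should remark that $F_{p/q}$ is also reduced and in fact irreducible: $m^p\ell^q - 1$ has no repeated factors since its zero set is smooth, and irreducibility follows from $\gcd(p,q) = 1$, which makes the curve the image of the one-parameter group $t \mapsto (t^q, t^{-p})$ up to the obvious covering.)

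Next, for transversality of two distinct surgery curves $F_{p/q}$ and $F_{p'/q'}$, I would take a point $(m_0, \ell_0)$ lying on both curves and compare the two gradient directions there. The normal direction to $F_{p/q}$ at $(m_0,\ell_0)$ is $(p\, m_0^{p-1}\ell_0^q,\ q\, m_0^p\ell_0^{q-1})$, which after scaling by the nonzero factor $m_0^{1-p}\ell_0^{-q}$ (legitimate since $m_0,\ell_0 \in \CC^*$) is proportional to $(p/m_0,\ q/\ell_0)$; similarly the normal to $F_{p'/q'}$ is proportional to $(p'/m_0,\ q'/\ell_0)$. These two normals are parallel if and only if $p q' - p' q = 0$. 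Since $p/q$ and $p'/q'$ are distinct slopes and both pairs are coprime, $pq' - p'q \neq 0$, so the normals are linearly independent and the two tangent lines at $(m_0,\ell_0)$ are distinct. Thus the intersection is transverse at every common point.

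I do not expect any serious obstacle here — the argument is essentially a one-line Jacobian computation made possible by the fact that every point of a surgery curve has both coordinates nonzero. The only points requiring a little care are bookkeeping ones: handling slopes $p/q$ with $p$ or $q$ negative (resolved by viewing $F_{p/q}$ as a subvariety of $(\CC^*)^2$, or by the substitution $\ell \mapsto \ell^{-1}$ etc., which is an automorphism of the torus), and being precise about what "non-singular" and "transverse" mean for these affine curves (scheme-theoretic smoothness of the affine curve, and transversality of intersection at each point, which here also forces the scheme-theoretic intersection $F_{p/q} \cap F_{p'/q'}$ to be reduced of dimension zero). I would also include the brief remark on irreducibility of $F_{p/q}$, since it is used implicitly when these curves are intersected with components of $X(M)$ later in the paper. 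If a fully global (projective) statement is wanted, one checks the behavior at the coordinate axes and at infinity, but for the applications in this paper the affine statement over $(\CC^*)^2$ suffices and is what I would record.
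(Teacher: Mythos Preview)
Your proposal is correct and follows essentially the same approach as the paper: both compute the partials of $m^p\ell^q-1$, use $m,\ell\neq 0$ on the curve to get non-singularity, and then compare gradient/tangent directions at a common point, reducing transversality to $pq'-p'q\neq 0$. Your treatment is in fact slightly more careful (you note that only one partial need be nonzero, covering the cases $p=0$ or $q=0$, whereas the paper asserts both are nonzero), and your added remarks on negative exponents and irreducibility are harmless extras not present in the paper's proof.
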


\begin{proof}
We will show that every point on $F_{p/q}$ is simple, and from this it will follow that $F_{p/q}$ is non-singular. 

Let $F=F_{p/q}$ be the polynomial $m^p \ell^q - 1$. Any solution to $F=0$ must have $m \neq 0$ and $\ell \neq 0,$ and it will be  a simple point so long as one of the partial derivatives $\del F / \del m$ or $\del F / \del \ell$ is non-zero at that point. But $\del F / \del m = p m^{p-1} \ell^q$ and $\del F / \del \ell = q m^p \ell^{q-1}$ are both non-zero at each point on $F$. Thus every point on $F$ is simple and consequently $F$ is non-singular.

Now suppose $p/q$ and $p'/q'$ are distinct slopes and set $F= F_{p/q}$ and $F' = F_{p'/q'}.$ Suppose $(m_0, \ell_0)$ is common solution to $F=0$ and $F'=0$. The equations of the tangent lines to $F$ and $F'$ at $(m_0, \ell_0)$ are given by: 
\begin{equation}\label{tangent}
\begin{split}
p m_0^{p-1} \ell_0^q (m - m_0) + q m_0^p \ell_0^{q-1} (\ell - \ell_0) &= 0, \\ 
p' m_0^{p'-1} \ell_0^{q'} (m - m_0) + q' m_0^{p'} \ell_0^{q'-1} (\ell - \ell_0) &= 0.
\end{split}
\end{equation}
Since $(m_0, \ell_0)$ lies on both surgery curves, we see that $m_0^{-1} \ell_0^{-1} = m_0^{p-1} \ell_0^{q-1} = m_0^{p'-1} \ell_0^{q'-1}$, and dividing both equations in \eqref{tangent} by this common factor, we obtain the tangent lines
$p \ell_0 (m - m_0) + q m_0 (\ell - \ell_0) = 0$ and $p' \ell_0 (m - m_0) + q' m_0 (\ell - \ell_0) = 0$, which have distinct slopes since $p/q$ and $p'/q'$ are distinct. This shows that  $F$ and $F'$ intersect transversely at $(m_0, \ell_0)$.
\end{proof}

We now give an outline of the proof of Theorem \ref{thm-main}. It is established by comparing the $\SLC$ Casson invariant $\la_\SLC(M_{p/q})$ for large $q$ with the algebro-geometric intersection number
$$\sum_{x} I_x (\wh{A}_K \cap F_{p/q})$$ 
of the $\wh{A}$-polynomial with the surgery curve $F_{p/q}$, where the sum is taken over all points in the intersection. 
A critical step in proving the theorem is to show the following: 

\medskip
\noindent{\bf Claim.}
There exists a real number $B$   
such that, for any slope $p/q$ such that $F_{p/q}$ does not divide $\wh{A}_K$, we have 
\begin{equation} \label{full-bound}
\tfrac{1}{2} \sum_x I_x(\wh{A}_K\cap F_{p/q}) - B \leq \la_\SLC(M_{p/q})\leq \tfrac{1}{2} \sum_x I_x(\wh{A}_K\cap F_{p/q}).
\end{equation}

Before proving the claim, we explain how to use it to deduce Theorem \ref{thm-main}.
The following lemma  evaluates $ \lim_{q \to \infty} \;  \tfrac{1}{q} \sum_x I_x (\wh{A}_K \cap F_{p/q}),$ where the limit is taken over all $q$ relatively prime to $p$. Since $\wh{A}_K$ has finitely many factors, the claim excludes finitely many slopes $p/q$. Thus the theorem follows  from the lemma by dividing \eqref{full-bound} by $q$, taking the limit as $q \to \infty$, $q$ relatively prime to $p$, and squeezing.  

\begin{lemma} \label{squeeze}
For any $p$, we have 
${\displaystyle \lim_{q \to \infty} \;  \tfrac{1}{q} \sum_x I_x} (\wh{A}_K \cap F_{p/q}) = \deg_m \wh{A}_K(m,\ell)$.
\end{lemma}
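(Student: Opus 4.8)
The plan is to compute the intersection number $\sum_x I_x(\wh{A}_K \cap F_{p/q})$ by Bézout's theorem applied on a suitable compactification, and then extract the leading term in $q$. Write $\wh{A}_K(m,\ell)$ as a polynomial in $\CC[m,\ell]$; by definition its $m$-degree is $d := \deg_m \wh{A}_K$. First I would observe that $F_{p/q} = m^p\ell^q - 1$ is irreducible (since $\gcd(p,q)=1$), and it does not divide $\wh{A}_K$ by hypothesis, so by Bézout the total intersection number of the two curves in $\CC\PP^2$ — or better, in the toric surface adapted to the Newton polygons — equals the product of the relevant degrees, \emph{provided} one accounts carefully for intersection points lying on the coordinate axes $m=0$, $\ell=0$ and at the line/points at infinity, where $F_{p/q}$ has no solutions but $\wh{A}_K$ may meet those loci. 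So the real content is a bookkeeping argument: the "honest" intersection points $x$ (those with $m_0\neq 0\neq \ell_0$) contribute a number that grows linearly in $q$ with leading coefficient $d$, while the contributions that Bézout would assign to the boundary are either zero (because $F_{p/q}$ avoids the axes) or bounded independently of $q$.

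Concretely, I would proceed as follows. Fix the Newton polygon $N$ of $\wh{A}_K$, a lattice polygon in the $(m,\ell)$-plane whose width in the $m$-direction is $d$. The Newton polygon of $F_{p/q}$ is the segment from $(0,0)$ to $(p,q)$. On the toric surface $Y$ associated to the common refinement of the two normal fans, the intersection number of the two curve classes is the mixed-volume-type expression $\mathrm{MV}(N, \mathrm{segment})$, which by the standard formula equals the lattice length of the projection of $N$ onto the line orthogonal to $(p,q)$, suitably normalized — and as $q\to\infty$ with $p$ fixed, the dominant direction is the $\ell$-axis, so this expression is $p\cdot(\text{height of }N) + q\cdot d + O(1)$, hence $\tfrac{1}{q}$ times it tends to $d$. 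Then I would separately check that none of this intersection is "lost at the boundary of $Y$": since every solution of $F_{p/q}=0$ has both coordinates nonzero (as noted in the proof of Lemma~\ref{useful-lemma}), $F_{p/q}$ meets the toric boundary divisors of $Y$ only in the torus-fixed points, and the local intersection multiplicity there with $\wh{A}_K$ is bounded by a quantity depending only on $N$ (the lattice length of the corresponding edge of $N$), not on $q$. Subtracting these $O(1)$ boundary terms from the toric Bézout count leaves exactly $\sum_x I_x(\wh{A}_K\cap F_{p/q})$, and dividing by $q$ and letting $q\to\infty$ gives $d$.

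An alternative, more elementary route avoids toric geometry: parametrize $F_{p/q}$ rationally. Since $\gcd(p,q)=1$, choose integers $a,b$ with $aq - bp = 1$; then $t\mapsto (m,\ell) = (t^{q}, t^{-p})$ followed by the correction to hit $m^p\ell^q=1$ gives a parametrization of (a Zariski-open piece of) $F_{p/q}$ by $\CC^*$, under which $\wh{A}_K(m,\ell)$ pulls back to a Laurent polynomial in $t$. Counting intersection points with multiplicity then amounts to counting zeros of this Laurent polynomial in $\CC^*$, i.e. the spread between its top and bottom $t$-degrees. A monomial $m^i\ell^j$ of $\wh{A}_K$ pulls back to $t^{qi - pj}$; the top $t$-degree is $\max_{(i,j)\in N}(qi-pj)$ and the bottom is $\min_{(i,j)\in N}(qi-pj)$, so the number of zeros in $\CC^*$ is $\max(qi-pj) - \min(qi-pj)$ over the Newton polygon, which for $q$ large equals $q\,d + O(p)$. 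Dividing by $q$ and taking the limit gives $d$. One still must argue that all intersection points of the two \emph{curves} are captured this way (i.e. $F_{p/q}$ is rational and the parametrization is injective onto its image away from finitely many points — this is exactly the non-singularity from Lemma~\ref{useful-lemma}), and that repeated factors of $\wh{A}_K$ are correctly weighted, which is automatic since intersection multiplicity is additive over factors.

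The main obstacle I anticipate is the boundary bookkeeping: making precise that the contributions from the coordinate axes and from infinity are genuinely $O(1)$ in $q$ and do not secretly hide another linear-in-$q$ term. In the toric picture this is the statement that the normal fan of the segment $[(0,0),(p,q)]$ has rays that, as $q\to\infty$, approach the rays of the $\ell$-axis, so the only edges of $N$ that can "collide" with the boundary at rate $q$ are those whose outward normals point in the $\pm\ell$ directions — and the length of such an edge is exactly what the $m$-degree measures, so it is accounted for correctly rather than doubly. In the elementary picture the same issue appears as: does the Laurent polynomial acquire spurious zeros or poles at $t=0,\infty$? It does not contribute \emph{new} intersection points, but one must confirm the degree count above is exactly $\sum_x I_x$ and not an over- or undercount by a bounded amount — and since Lemma~\ref{squeeze} only asserts the limit, any bounded discrepancy is harmless. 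I would therefore present the elementary parametrization argument as the main line, invoking Lemma~\ref{useful-lemma} for non-singularity, and remark that the degree computation $\lim_{q\to\infty}\tfrac1q\big(\max_{(i,j)\in N}(qi-pj)-\min_{(i,j)\in N}(qi-pj)\big) = \deg_m\wh{A}_K$ is immediate from the definition of the Newton polygon.
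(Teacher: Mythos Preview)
Your elementary parametrization approach is essentially the paper's proof: the paper parametrizes $F_{p/q}$ by $t \mapsto (t^q, t^{-p})$ (no ``correction'' is needed, since $(t^q)^p(t^{-p})^q = 1$ automatically, and the explicit inverse $t = m^s\ell^{-r}$ with $pr+qs=1$ shows this is a genuine bijection $\CC^* \to F_{p/q}$ rather than merely a consequence of non-singularity), substitutes into $\wh{A}_K$, and reads off the $t$-degree of the resulting Laurent polynomial as $nq + O(1)$. Your Newton-polygon phrasing of the degree spread $\max_{(i,j)\in N}(qi-pj)-\min_{(i,j)\in N}(qi-pj)$ is equivalent to the paper's explicit expansion $\wh{A}_K = \sum_i m^i \alpha_i(\ell)$ and computation $\deg_t = nq + p(\deg\alpha_0 - \deg\alpha_n)$.
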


\begin{proof}
If $p$ and $q$ are relatively prime, then we have integers $r,s$ with $pr+qs=1.$ We parameterize solutions to $F_{p/q}(m,\ell)=0$ by setting $m = t^q$ and $\ell = t^{-p}$, where $t \in \CC^*$. Clearly $(m,\ell) = (t^q,t^{-p})$ lies on the curve $F_{p/q}$, and $\ell^{-r} m^s = t^{pr} t^{qs}= t^{pr+qs} = t,$ thus any point on $F_{p/q}$ lies on this parameterization.

Suppose $\deg_m \wh{A}_K(m,\ell) = n.$ Then we can write
$$\wh{A}_K(m,\ell) = m^n \al_n(\ell) + m^{n-1}\al_{n-1}(\ell) + \cdots + m \al_1(\ell) + \al_0(\ell),$$
where each $\al_i(\ell)$ is a polynomial in $\ell.$ Now substitute $m = t^q$ and $\ell = t^{-p}$ to obtain
$$\wh{A}_K(t^q,t^{-p}) = (t^q)^n \al_n(t^{-p}) + (t^q)^{n-1} \al_{n-1}(t^{-p}) + \cdots + t^q \al_1(t^{-p}) + \al_0(t^{-p}).$$
Further, since $t \neq 0$, the roots of this Laurent polynomial are identical to the roots of the polynomial obtained by multiplying by $t^d$, where $d$ is chosen so that $t^d \wh{A}_K(t^q,t^{-p})$ is a polynomial with nonzero constant term. Note that, for large $q$, we can take $d=p\deg \al_0$, which is clearly independent of $q$.

The fundamental theorem of algebra implies that $\sum_x I_x (\wh{A}_K \cap F_{p/q})$ equals the degree of $t^d \wh{A}_K(t^q,t^{-p})$. For $q$ sufficiently large, we have
 $$\deg t^d \wh{A}_K(t^q,t^{-p})=d + nq -p \deg \al_n =nq+p(\deg \al_0 - \deg \al_n).$$
Thus, fixing $p$ and letting $q\to \infty$, we see that the intersection
$\sum_x I_x (\wh{A}_K \cap F_{p/q})$ grows linearly in $q$ with leading coefficient $n = \deg_m \wh{A}_K(m,\ell).$ This completes the proof of the lemma.
\end{proof}

\noindent 
\emph{Proof of Claim.} 
In order to establish the bound \eqref{full-bound}, we compare the $\SLC$ Casson invariant $\la_\SLC(M_{p/q})$ with the intersection number $\tfrac{1}{2} \sum_x I_x (\wh{A}_K \cap F_{p/q})$.  We will show that most points in $\CC^* \times \CC^*$ contribute equally to $\la_\SLC(M_{p/q})$ and $\tfrac{1}{2} \sum_x I_x (\wh{A}_K \cap F_{p/q})$, with the sole exceptions being points of the following types:
\begin{enumerate}
\item[(1)]  Solutions $x=(m,\ell)$ to  both $\wh{A}_K(m,\ell) =0$ and $F_{p/q}(m,\ell)=0$ with $m,\ell =\pm 1$.
\item[(2)] Solutions $x=(m,\ell)$ to both $\wh{A}_K(m,\ell) =0$ and $F_{p/q}(m,\ell)=0$ with $\ell = 1$ and $m^2$ equal to a root of the Alexander polynomial $\Delta_K(t)$.
\item[(3)] Solutions $x=(m,\ell)$ to both $\wh{A}_K(m,\ell) =0$ and $F_{p/q}(m,\ell)=0$ with $t(x) = r(\hat{x})$ for an ideal point $\hat{x} \in \wt{X}_j$.  
\end{enumerate}

Suppose $F_{p/q}$ does not divide $\wh{A}_K$ and that $x \in \wh{A}_K \cap F_{p/q}$ is not a point of type (1), (2), or (3).
Writing $\wh{A}_K =  \wh{A}_{K,X_1} \cdots \wh{A}_{K,X_n}$, the basic properties of intersection numbers from \cite[3.3]{F69} imply that
$I_x(\wh{A}_{K} \cap F_{p/q}) = \sum_j I_x(\wh{A}_{K,X_j} \cap F_{p/q}).$
Under the assumption $(*)$, since the ring $\cR(G)$ is independent of the presentation for $\pi_1(M)$, it is clear that the intersection multiplicity $n_{Y_j}$ given in the proof of Proposition 4.3 of \cite{C01} is equal to one for each component $X_j$ of $X(M)$, and therefore by the same proposition
the contribution of $x$ to $\la_\SLC(M_{p/q})$ is $\sum_j d_{j} i_{X_j,x}$,  where $d_j = \deg(r|_{X_j})$ 
and $i_{X_j,x} = \frac{1}{2} I_x(E_j \cap F_{p/q})$, where $E_j$ is the unique polynomial with no repeated factors and integer coefficients vanishing on $\overline{t^{-1}(r(X_j))}.$ The factor of 1/2 is due to the fact that $t\colon \La  \to X(\partial M)$ is generically two-to-one. Here note that the proof of Proposition 4.3 of \cite{C01} holds for any one-dimensional component $X_j$ of $X(M)$ such that $r(X_j)$ is one-dimensional even if $M$ contains a closed incompressible surface.
Note further that by Corollary 2 on p. 75 of \cite{Sf94}, if $Y$ is a component of $X(M)$ with $\dim Y >1$, then the intersection $Y \cap r^{-1}(t(F_{p/q}))$ does not contain any zero-dimensional components and thus $Y$ does not contribute to the Casson $\SLC$ invariant $\la_\SLC(M_{p/q})$.

Since $\wh{A}_{K,X_j}$ is defined as a curve with multiplicity  
$d_j = \deg(r|_{X_j})$, it follows that
$\wh{A}_{K,X_j} = E_j^{d_j},$ and this implies that
$I_x(\wh{A}_{K,X_j} \cap F_{p/q}) = d_j \; I_x(E_j \cap F_{p/q})$.
This shows that points $x \in \CC^* \times \CC^*$ which are not of types (1)--(3) contribute equally to $\la_\SLC(M_{p/q})$ and $\frac{1}{2} \sum_x I_x (\wh{A}_K \cap F_{p/q})$. 

In contrast, points of types (1)--(3) may
contribute differently to $\la_\SLC(M_{p/q})$ and $\frac{1}{2} \sum_x I_x(\wh{A}_K \cap F_{p/q})$ as follows:
 
Points of type (1) need not correspond to points in the character variety $X(M_{p/q})$ and will therefore sometimes contribute less to 
$\la_\SLC(M_{p/q})$ than to $\frac{1}{2} \sum_x I_x(\wh{A}_K \cap F_{p/q}).$ (For more details, see Section 4.1 of \cite{C01}.)

By \cite[Proposition 6.2]{CCGLS}, we see that points of type (2) correspond to images of reducible characters in $X(M_{p/q})$ and thus will contribute less to $\la_\SLC(M_{p/q})$ than to $\frac{1}{2} \sum_x I_x(\wh{A}_K \cap F_{p/q})$.

Points of type (3) correspond to images of ideal characters and as such will also contribute less to $\la_\SLC(M_{p/q})$ than to $\frac{1}{2} \sum_x I_x (\wh{A}_K \cap F_{p/q})$.

Since, in all three cases, the points $x$ never contribute more to $\la_\SLC(M_{p/q})$ than to $\frac{1}{2} \sum_x I_x (\wh{A}_K \cap F_{p/q})$, it follows that 
\begin{equation}\label{upper-bound}
\la_\SLC(M_{p/q}) \leq \tfrac{1}{2} \sum_x I_x(\wh{A}_K \cap F_{p/q}).
\end{equation}

We will now show that there are at most finitely many points of types (1), (2), and (3). A type (1) point $x$ satisfies $x =(m,\ell) = (\pm 1, \pm 1);$ thus there are at most four of them. Because the Alexander polynomial $\Delta_K(t)$ has finitely many roots, there are finitely many points of type (2). Since the number of ideal points on any one-dimensional component $X_j$ is finite, and since $X(M)$ has finitely many components, it follows that there are finitely many type (3) points.  

We determine a bound $B$ for the sum of the contributions of points of types (1)--(3) to $\frac{1}{2} \sum_x I_x (\wh{A}_K \cap F_{p/q})$ that is independent of $p$ and $q$. Note that this bound is not immediate, as while the number of points of types (1)--(3) is finite, they can nevertheless lie on infinitely many surgery curves.   For example, consider the point $x = (e^{6\pi i/5}, e^{2 \pi i/5})$, which satisfies $m = \ell^3$ and $\ell^5 = 1$. Then $x$ lies on $F_{p/q}$ whenever $p$ is not a multiple of 5. If $x$ were a point of type (3), then it could possibly lie on infinitely many surgery curves $F_{p/q}$, for $p$ fixed and $q \to \infty.$ Whether or not points of type (3) exist at all is, to the best of our knowledge, an open question, see \cite[Question 5.1]{CL96} and \cite[Theorem 3.3]{Ch}. (Note that those papers refer to type (3) points as \emph{holes}.)

Now suppose $x$ is a type (1), (2), or (3) point.  Since the surgery curves $F_{p/q}$ are all non-singular and pairwise transverse, at most finitely many of them, say $F_{p_1/q_1}, \ldots, F_{p_k/q_k}$, will intersect $\wh{A}_K$ non-transversely at $x$. 
If such non-transverse intersections exist, then for $i=1,\ldots, k,$ let $b_{x,i} =  \frac{1}{2} I_x(\wh{A}_K \cap F_{p_i/q_i})$ and set $B_x=\max \{ b_{x,1}, \ldots, b_{x,k} \}.$ If every curve $F_{p/q}$ which meets $\wh{A}_K$ at $x$ intersects $\wh{A}_K$ transversely at $x$ then set $B_x = \frac{1}{2} I_x(\wh{A}_K \cap F_{p_0/q_0})$, where $F_{p_0/q_0}$ is an arbitrary surgery curve containing $x$. 
Then for any surgery curve $F_{p/q}$, we have $\frac{1}{2} I_x(\wh{A}_K \cap F_{p/q}) \leq B_x.$
Setting $B=\sum_x B_x$, with the sum taken over all type (1), (2), and (3) points, we see that
\begin{equation}\label{B-bound}
\tfrac{1}{2} \sum_x I_x (\wh{A}_K \cap F_{p/q}) - \la_\SLC(M_{p/q})\leq B.
\end{equation}
Combining Equations \eqref{upper-bound} and \eqref{B-bound} gives \eqref{full-bound}, and this completes the proof of the claim and the proof of the theorem.
\qed
 
 We conclude this subsection with two observations. First, note that the definition of the knot invariant $\la'_\SLC(K)$ is an important first step in developing a Dehn surgery formula for the $\SLC$ Casson invariant. However as is clear from the proof above, a complete surgery formula must include correction terms measuring the difference $\la_\SLC(M_{p/q}) - \frac{1}{2} \sum_x I_x (\wh{A}_K\cap F_{p/q})$ at slopes $p/q$ for which the intersection $\wh{A}_K \cap F_{p/q}$ contains points of type (1), (2), and (3). Corrections for points of type (1) may be made analogously to the corrections for small knots in \cite{C01}; in this case the correction terms will depend only on the parity of $p$. For points of type (2) and (3), new arguments will be needed.

Let $G$ be a compact group and suppose that the Casson invariant $\la_G(\Si)$ has been defined for homology 3-spheres. Then the associated knot invariant is defined as the difference $\la'_G(K) = \la_G(M_{1/{(q+1)}}) - \la_G(M_{1/q})$, which one must show is independent of $q$. In that case, the limit ${\lim_{q \to \infty}} \, \tfrac{1}{q} \, \la_G(M_{1/q})$ exists and equals $\la'_G(K)$. In our case, we have seen that the limit \eqref{gen-def} is independent of $p$, and our proof shows that 
 $\la'_\SLC(K) = \frac{1}{p} (\la_\SLC(M_{p/(q+p)}) - \la_\SLC(M_{p/q}))$ provided $q$ and $p$ are relatively prime and $F_{p/(p+q)}$ and $F_{p/q}$ do not contain any points of types (1), (2), or (3). 
 
\subsection{The $\wh{\mathbf A}$-polynomial for connected sums}
In the next result, we present a product formula for the  $\wh{A}$-polynomial under connected sum of two knots, (cf. Proposition 4.3 of \cite{CL96}, where a similar result for the $A$-polynomial is established).

\begin{theorem} \label{A-hat-connect}
If $K_1$ and $K_2$ are two  oriented knots in $S^3$, then
 $$\wh{A}_{K_1 \# K_2}(m,\ell) = \wh{A}_{K_1}(m,\ell) \cdot \wh{A}_{K_2}(m,\ell).$$
\end{theorem}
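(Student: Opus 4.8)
The plan is to follow the strategy of Proposition 4.3 of \cite{CL96}, where the analogous product formula is proved for the $A$-polynomial, while keeping track of the multiplicities $d_j=\deg(r|_{X_j})$ that distinguish $\wh{A}$ from $A$. Write $M=\Si\sm\tau(K_1\#K_2)$ with $\Si=S^3$. The connect-sum $2$-sphere meets $K_1\#K_2$ in two points, so $M$ decomposes as $M=M_1\cup_{A}M_2$ along an annulus $A$ whose core is a meridian; this gives $\pi_1M=\pi_1M_1*_{\langle\mer\rangle}\pi_1M_2$, with $\mer$ the common meridian and $\lng=\lng_1\lng_2$ the longitude on the peripheral torus $\del M$. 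Restriction of characters gives regular maps $r_i\colon X(M)\to X(M_i)$, and whenever $\rho(\mer)$ has distinct eigenvalues $m$, the commuting matrices $\rho(\lng_1),\rho(\lng_2)$ are simultaneously diagonalized with $\rho(\mer)$, with eigenvalues $\ell_1,\ell_2$ satisfying $\ell=\ell_1\ell_2$.

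Next I would classify the one-dimensional components $X_j\subset X(M)$ that contain an irreducible character and have $r(X_j)$ one-dimensional, since these are exactly the components contributing to $\wh{A}_{K_1\#K_2}$. The claim is that for each such $X_j$ and a generic $\chi=\chi_\rho\in X_j$, exactly one of the restrictions $\rho_i=\rho|_{\pi_1M_i}$ is reducible; say $\rho_2$ is reducible and $\rho_1$ is irreducible. This is proved by a dimension count. If both $\rho_1$ and $\rho_2$ were irreducible, one could deform $\rho_1$ along $r_1(X_j)$, adjust $\rho_2$ so that its value on $\mer$ stays matched, and in addition conjugate $\rho_2$ by the one-dimensional centralizer of $\rho(\mer)$ — a ``bending'' deformation yielding generically non-conjugate representations — producing a family of dimension at least $2$ inside $X(M)$, so $\dim X_j\ge 2$, a contradiction. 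If instead both $\rho_1,\rho_2$ were reducible while $\rho$ is irreducible, then $\rho(\mer)$ must have distinct eigenvalues $m_0$ with $m_0^2$ a common root of $\Delta_{K_1}$ and $\Delta_{K_2}$ and each $\rho_i$ a non-abelian reducible representation; one can then unfold $\rho_1$ into a non-abelian curve of $X(M_1)$ and bend, again forcing $\dim X_j\ge 2$. The remaining case $\rho(\mer)=\pm I$ is handled directly: $\rho(\mer)=I$ forces $\rho$ trivial, while $\rho(\mer)=-I$ forces any reducible restriction to be central, so either exactly one restriction is irreducible (as claimed) or both are, reducing to the non-abelian pair already excluded.

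With the split structure in hand, I would extract the rest from the reducible factor. Since $H_1(M_2;\ZZ)=\ZZ\langle\mer\rangle$ and $\lng_2$ is nullhomologous, every abelian representation of $\pi_1M_2$ sends $\lng_2\mapsto I$; as $\chi_{\rho_2}$ is the character of the semisimplification of $\rho_2$, which is such an abelian representation, we get $\ell_2=1$, hence $\ell=\ell_1$. Moreover $\rho_1$ is irreducible, so $\chi_{\rho_1}$ lies on a one-dimensional component $Y_j\subset X(M_1)$ with $r_1(Y_j)$ one-dimensional, i.e.\ a component contributing to $\wh{A}_{K_1}$, and $r_1\colon X_j\to Y_j$ is a birational isomorphism: for generic $m$ the only reducible representation of $\pi_1M_2$ with that value on $\mer$ is diagonal, so the gluing is rigid, $\chi_\rho$ is determined by $\chi_{\rho_1}$, and conversely every such $\rho_1$ extends with $\rho$ irreducible. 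Using $\ell=\ell_1$, the eigenvalue curves associated to $X_j$ and to $Y_j$ coincide as curves in $\CC^*\times\CC^*$, so their reduced defining polynomials agree; and since $r_1$ identifies $X_j$ with $Y_j$ compatibly with the restriction maps to the boundary tori, the degrees of these restrictions agree, $d_j=\deg(r_1|_{Y_j})$. Hence $\wh{A}_{K_1\#K_2,X_j}=\wh{A}_{K_1,Y_j}$, and symmetrically for the components whose $M_1$-restriction is reducible.

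Finally, $X_j\mapsto r_1(X_j)$ is a bijection from the contributing components of $X(M)$ whose $M_2$-restriction is reducible onto all contributing components of $X(M_1)$, and likewise with the roles of $M_1$ and $M_2$ swapped; multiplying $\wh{A}_{K_1\#K_2,X_j}$ over all contributing $X_j$ and grouping the two types gives $\wh{A}_{K_1\#K_2}(m,\ell)=\wh{A}_{K_1}(m,\ell)\cdot\wh{A}_{K_2}(m,\ell)$. I expect the main obstacle to be the dimension count of the second paragraph: one must check carefully that the bending deformations, and in the reducible-pair case also the unfolding deformations, are honest algebraic families, are independent, and yield non-conjugate representations, so that the offending components genuinely have dimension exceeding $1$ and drop out of $\wh{A}$. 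This is also the step that must be pushed beyond the $A$-polynomial statement of \cite{CL96}, since the multiplicity $d_j$ has to be transported along the identification $X_j\cong Y_j$.
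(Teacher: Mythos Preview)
Your approach is essentially that of the paper: both arguments use the amalgamated-product decomposition $\pi_1(M)=\pi_1(M_1)*_{\langle\mer\rangle}\pi_1(M_2)$, both identify the contributing one-dimensional components of $X(M)$ with pullbacks from $X(M_1)$ or $X(M_2)$ via the ``exactly one restriction is abelian'' dichotomy, both use that the abelian side sends its longitude to $I$ so that the eigenvalue curves match, both verify that the multiplicities $d_j=\deg(r|_{X_j})$ agree by a generic-fiber count, and both invoke algebraic bending along the centralizer of $\rho(\mer)$ to show that any component on which both restrictions are non-abelian has dimension exceeding $1$ and hence does not contribute to $\wh{A}$. The paper simply runs the correspondence in the opposite direction, starting from a component of $X(M_i)$ and pulling back, rather than starting from $X_j\subset X(M)$ and pushing forward as you do.

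The one place your argument diverges is the subcase where both $\rho_1,\rho_2$ are reducible non-abelian while $\rho$ is irreducible. Your proposed ``unfold $\rho_1$ into a non-abelian curve of $X(M_1)$ and bend'' is shaky: it is not automatic that a reducible non-abelian character at a root of $\Delta_{K_1}$ lies on a curve of irreducibles in $X(M_1)$, and even when it does, the resulting two-parameter family need not stay inside the given component $X_j\subset X(M)$ once $\rho_2$ is forced to track the changing meridional eigenvalue. The paper handles this subcase more simply and robustly: since $\lng_i$ lies in the second commutator subgroup of $\pi_1(M_i)$, any reducible $\rho_i$ sends $\lng_i\mapsto I$, so $\rho(\lng)=I$; together with the constraint that the meridional eigenvalue squared be a common root of $\Delta_{K_1}$ and $\Delta_{K_2}$, such characters account for at most finitely many points of $r(X_j)$. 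Since $r(X_j)$ is one-dimensional by hypothesis, the generic character on $X_j$ therefore has at least one $\rho_i$ irreducible, and bending applies directly. Replacing your unfold-and-bend step with this finiteness observation closes the gap you yourself flagged.
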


\begin{proof}
Let $M_1 = S^3 \sm \tau(K_1)$ and $M_2 = S^3 \sm \tau(K_2)$ be the complements of $K_1$ and $K_2,$ respectively.
Further, let $K=K_1 \#K_2$ be the connected sum of the two knots and $M =  S^3 \sm \tau(K)$ be the complement.
Then by Seifert-van Kampen, for appropriately chosen meridians  $\mer_1$ and $\mer_2$ for $K_1$ and $K_2$, we see that
$$\pi_1(M) = \pi_1(M_1) *_\varphi \pi_1(M_2)$$ is an amalgamated product under the homomorphism $\varphi\colon \langle \mer_1\rangle \to \langle \mer_2\rangle$ given by $\varphi(\mer_1) = \mer_2.$ It follows that the representation space $R(M)$ can be viewed as a subset of the product $R(M_1) \times R(M_2)$, namely
$$R(M) =\{ (\rho_1, \rho_2 ) \in R(M_1) \times R(M_2) \mid \rho_1 (\mer_1)=\rho_2(\mer_2) \}.$$
Given $\rho_i \in R(M_i)$ for $i=1,2$ such that $\rho_1 (\mer_1)=\rho_2(\mer_2),$ we denote
the associated point in $R(M)$ by $\rho_1 *\rho_2$. Let $\mer$ be the meridian of $K_1\# K_2$ corresponding to $\mer_1$ and $\mer_2$ under this identification.

Given a representation $\rho_1\colon \pi_1(M_1) \to \SLC$ such that $\rho_1(\mer_1)$ is conjugate to a diagonal matrix, we can pull it back along the surjection $\pi_1(M) \to \pi_1(M_1)$ to get  a representation $\rho \colon \pi_1(M) \to \SLC$, and in that case
$\rho =  \rho_1 *\rho_2$, where $\rho_2\colon \pi_1(M_2) \to \SLC$ is abelian. 
The meridians and longitudes of $K_1,K_2,$ and $K=K_1\#K_2$ are related by $\mer = \mer_1 = \mer_2$ and 
$\lng = \lng_1\lng_2$, and since $\rho_2$ is abelian, we see that $\rho_2(\lng_2)=I.$ It follows
that $\rho(\mer) = \rho_1(\mer_1)$ and $\rho(\lng) = \rho_1(\lng_1) \rho_2(\lng_2) = \rho_1(\lng_1).$

Note that at most finitely many characters in $X(\partial M_1)$  are characters of representations taking the meridian to a matrix of trace $\pm 2$, and any representation $\rho_1\colon \pi_1(M_1) \to \SLC$ which does not take the meridian to a matrix of trace $\pm 2$ can be conjugated so that $\rho_1(\mer_1)$ is diagonal.
Thus, using the correspondence from the previous paragraph, for any one-dimensional component $X'_j$ of $X(M_1)$, there is a corresponding one-dimensional component $X_j$ of $X(M)$
such that $r'(X'_j) = r(X_j)$. (Here,  $r'\colon X(M_1) \to X(\partial M_1)$ and $r\colon X(M)\to X(\partial M)$ denote the two restriction maps.)
This implies that $\wh{A}_{K_1, X'_j}(m,\ell)$ and $\wh{A}_{K, X_j}(m,\ell)$ 
contain the same factors. 

We now argue that the multiplicities $d'_{j}$ and $d_j$ of the factors in $\wh{A}_{K_1,X_j}(m,\ell)$ and $\wh{A}_{K,X_j}(m,\ell)$ agree. To see this,  recall that the multiplicities are defined in terms of the degree of the restriction maps, which in turn are defined as the cardinality of a generic fiber. 
Choosing $\chi \in r'(X'_{j})$ a generic point  so that $(r')^{-1}(\chi)$ consists entirely of irreducible characters, and noting that the pullback construction gives a one-to-one correspondence between irreducible representations $\rho_1\colon \pi_1(M_1) \to \SLC$ and irreducible representations  $\rho=\rho_1 * \rho_2\colon \pi_1(M) \to \SLC$ with $\rho_2$ abelian, it follows that  
 \begin{eqnarray*}
 d'_{j} &=&\deg \left(r'|_{X'_{j}} \colon X'_{j} \to X(\partial M_1)\right) \\
 &=& \#\left((r')^{-1}(\chi) \cap X'_{j}\right) \\ 
 &=& \#\left(r^{-1}(\chi) \cap X_j\right) \\ 
 &=&\deg\left(r|_{X_j} \colon X_j \to X(\partial M_1)\right) = d_j.
 \end{eqnarray*}
Since the multiplicities agree, we conclude that $\wh{A}_{K_1, X'_{j}}(m,\ell)=\wh{A}_{K, X_j}(m,\ell).$
A similar argument with the roles of $K_1$ and $K_2$ reversed shows that for any one-dimensional component $X'_{j}$ of $X(M_2)$, there is a one-dimensional component $X_j$ of $X(M)$ such that $\wh{A}_{K_2, X'_{j}}(m,\ell)=\wh{A}_{K, X_j}(m,\ell).$ 

We claim that this accounts for all one-dimensional components of $X(M)$.  The previous argument accounts for all characters of representations for which either $\rho_1$ or $\rho_2$ is abelian. Suppose then $X_j$ is a component in the character variety $X(M)$ containing the character $\chi_\rho$ of an irreducible
representation  $\rho=\rho_1 *\rho_2 \colon \pi_1(M) \to \SLC$ such that neither $\rho_1$ nor $\rho_2$ is abelian.
Suppose further that $r(X_j)$ is one-dimensional, since otherwise it would not contribute to $\wh{A}_{K}(m,\ell).$

Note that if both $\rho_1$ and $\rho_2$ are reducible, then by Proposition 6.1 of \cite{CCGLS}, any eigenvalue $\mu$ of $\rho(\mer) = \rho_i(\mer_i)$ satisfies the condition that $\mu^2$ is a root of both $\Delta_{K_1}(t)$ and $\Delta_{K_2}(t),$ the Alexander polynomials of $K_1$ and $K_2.$ Further, $\rho_1(\lng_1) = I =\rho_2(lng_2)$ since both representations are reducible. (This step uses the fact that $\lng_1$ and $\lng_2$ lie in the second commutator subgroup of $\pi_1(M_1)$ and $\pi_2(M_2)$, respectively.) It follows that $\rho(\lng) = \rho_1(\lng_1)\rho_2(\lng_2) = I$. Thus, under restriction, such representations account for at most finitely many points in $r(X_j)$. Hence without loss of generality we may assume that $\rho_1$ is irreducible. 
 
 Since the meridian $\mer$ normally generates $\pi_1(M),$ we see that $\rho(\mer) \neq \pm I.$ It follows that $\Ga_{\rho(\mer)}$,
 the stabilizer subgroup of $\rho(\mer)$, is one-dimensional.  
  We use the technique known as algebraic bending via the action of the group $\Ga_{\rho(\mer)}$ to show that
 $\dim X_j >1$. (See Section 5 of \cite{JM} for a thorough explanation of this technique.)

 Set $\rho_A =\rho_1 * (A \rho_2 A^{-1})$ for $A \in \Ga_{\rho(\mer)}$. Notice that $\rho_A$ is irreducible, and that it is conjugate to $\rho$ if and only if $A=\pm I.$ Allowing $A$ to vary over $\Ga_{\rho(\mer)}$, the family $\rho_A$ of irreducible representations gives rise to a one-dimensional family $\chi_{\rho_A}$ of irreducible characters in $X_j$ such that 
$r(\chi_{\rho_A}) = r(\chi_\rho)$ under the restriction map $r\colon X(M) \to X(\partial M)$. 
 Thus the one-dimensional family of irreducible characters lies in the  fiber $r^{-1}(\chi_\rho)$, and
 since $r(X_j)$ is one-dimensional by assumption, it follows that $\dim X_j >1.$
This completes the proof of the theorem. 
\end{proof}

Combining Corollary \ref{corollary-casson} and Theorem \ref{A-hat-connect},
we see that the Casson $\SLC$ knot invariant is additive under connected sums in $S^3$ provided the knots satisfy the condition $(*)$. 
\begin{corollary}
Let $K_1$ and $K_2$ be knots in $S^3$ such that $K_1$, $K_2$, and $K_1\# K_2$ satisfy $(*)$. Then 
$$\la'_\SLC(K_1 \# K_2) = \la'_\SLC(K_1) +\la'_\SLC(K_2).$$
\end{corollary}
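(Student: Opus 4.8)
The plan is to deduce this corollary directly by combining the two results already established, so the proof is essentially a one-line computation with a brief justification that the hypotheses line up. First I would recall that, since $K_1$, $K_2$, and $K_1\#K_2$ all satisfy $(*)$, Corollary~\ref{corollary-casson} applies to each of them and gives $\la'_\SLC(K_i) = \tfrac{1}{2}\deg_m \wh{A}_{K_i}(m,\ell)$ for $i=1,2$, as well as $\la'_\SLC(K_1\#K_2) = \tfrac{1}{2}\deg_m \wh{A}_{K_1\#K_2}(m,\ell)$. Next I would invoke Theorem~\ref{A-hat-connect}, the product formula $\wh{A}_{K_1\#K_2}(m,\ell) = \wh{A}_{K_1}(m,\ell)\cdot\wh{A}_{K_2}(m,\ell)$, which holds for any two oriented knots in $S^3$ and in particular requires no hypothesis beyond what we already have.

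The key remaining step is that the $m$-degree is additive under multiplication of polynomials: $\deg_m(\wh{A}_{K_1}\cdot\wh{A}_{K_2}) = \deg_m \wh{A}_{K_1} + \deg_m \wh{A}_{K_2}$. This is immediate since the leading terms in $m$ of two polynomials multiply to a nonzero leading term of the product (there is no cancellation over the integral domain $\CC[m,\ell]$). Stringing these together yields
\[
\la'_\SLC(K_1\#K_2) = \tfrac{1}{2}\deg_m \wh{A}_{K_1\#K_2} = \tfrac{1}{2}\bigl(\deg_m \wh{A}_{K_1} + \deg_m \wh{A}_{K_2}\bigr) = \la'_\SLC(K_1) + \la'_\SLC(K_2),
\]
which is the claimed additivity.

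There is essentially no obstacle here; the work has all been done in Theorem~\ref{thm-main} and Theorem~\ref{A-hat-connect}. The only point worth a word of care is the degenerate case where one of the $\wh{A}$-polynomials is trivial (i.e.\ the associated knot has no one-dimensional component of $X(M)$ with one-dimensional restriction), in which case one adopts the convention that the trivial $\wh{A}$-polynomial is the constant $1$ with $m$-degree $0$, and the formula still holds with the corresponding $\la'_\SLC = 0$. I would simply note this convention in passing and otherwise present the computation above as the proof.
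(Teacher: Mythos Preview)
Your proposal is correct and matches the paper's approach exactly: the paper presents this corollary as an immediate consequence of Corollary~\ref{corollary-casson} and Theorem~\ref{A-hat-connect}, with no further argument given. Your additional remarks on additivity of $m$-degree and the degenerate case are fine but not strictly needed.
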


\subsection{Whitehead doubles}
In this subsection, we present a construction of knots $K$ whose character variety has no one-dimensional components other than the component of reducibles. A specific example is provided by the untwisted Whitehead double of the trefoil. Similar computations for the $SU(2)$ character varieities were developed by Eric Klassen in \cite{K93}, and the idea of adapting his approach to the $\SLC$ setting was suggested by Michael Heusener.

Given a knot $J$ in $S^3$, the Whitehead double of $J$ is the knot obtained by gluing one component of the Whitehead link $L$ (shown in Figure \ref{Whitehead}) into the boundary of a tubular neighborhood of $J$. Alternatively, it is the knot whose complement is constructed by gluing the complement of $J$ to the complement $W = S^3 \sm \tau(L)$ of $L$ by a homeomorphism along the common 2-torus. The specific homeomorphism may introduce twists, and the resulting knot is denoted $K_n$ and called the $n$-twisted double of $J$.

\begin{figure}[hb]
\begin{center}
\leavevmode\hbox{}
\includegraphics[scale=1.20]{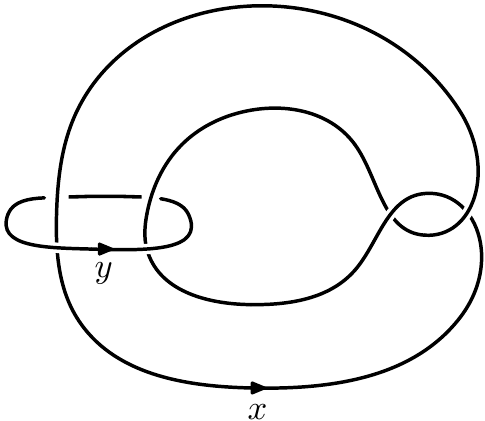}
\caption{The Whitehead link.} 
\label{Whitehead}
\end{center}
\end{figure}

Before providing a detailed construction of $K_n$, we first investigate $W$. 
The fundamental group of $W$ admits the presentation (cf. Lemma 9.4 of \cite{CL96}): 
\begin{equation} \label{plug-presentation}
\pi_1(W) = \langle x, y \mid y x y^{-1} x^{-1} y x^{-1} y^{-1} x = x y^{-1} x^{-1} y x^{-1} y^{-1} x y \rangle.
\end{equation}
If $\la_x, \la_y$ denote the longitudes associated to the two components of $L$, then one can further show that 
\begin{equation} \label{longitudes} \begin{split}
\la_x &= y^{-1}xyx^{-1}yxy^{-1}x^{-1}, \\
\la_y &= y^{-1} x^{-1} y x y^{-1} x y x^{-1}.
\end{split} \end{equation}

Writing $L = \ell_1 \cup \ell_2,$ notice that $\ell_1$ and $\ell_2$ are both unknotted, and so the comple-ment $V = S^3 \sm \tau(\ell_2)$ is just the solid torus. 
We will use $y,\la_y$ to also denote the meridian and longitude for $\ell_2$ in $\partial V.$

We now present several lemmas that describe the character variety $X(W)$.
The first lemma identifies the irreducible components of $X(W)$.
\begin{lemma} \label{two-comps}
The character variety $X(W)$ of representations $\rho \colon \pi_1(W) \to \SLC$ consists of two irreducible components $X_0$ and $X_1$, each of dimension two. The component $X_0$ contains all the reducible characters,
and
the component $X_1$ contains all the irreducible characters. \end{lemma}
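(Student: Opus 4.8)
The plan is to view $X(W)$ as a subvariety of $X(F_2) = \CC^3$, where $F_2 = \langle x,y\rangle$ is free, with coordinates $a = \tr\rho(x)$, $b = \tr\rho(y)$, $c = \tr\rho(xy)$, and to read off its components directly from the presentation \eqref{plug-presentation}. The first observation I would record is that the relator is a commutator: setting $P = x y^{-1} x^{-1} y x^{-1} y^{-1} x$, the left and right sides of \eqref{plug-presentation} are precisely $yP$ and $Py$, so $\pi_1(W) = \langle x, y \mid [P,y] = 1\rangle$. In particular the relator lies in $[F_2,F_2]$, so every diagonal --- hence every abelian --- representation of $F_2$ descends to $\pi_1(W)$. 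The reducible characters of $F_2$ form the surface $X_0 = \{a^2 + b^2 + c^2 - abc - 4 = 0\}$, which is irreducible (as a quadratic in $a$ it has discriminant $(b^2-4)(c^2-4)$, which is squarefree and hence not a square in $\CC[b,c]$); thus $X_0 \subseteq X(W)$ and, being two-dimensional and irreducible, $X_0$ is a whole component. Note also that $X_0$ contains \emph{every} reducible character of $\pi_1(W)$, and that $X(W) \subsetneq \CC^3$ (otherwise every irreducible representation of $F_2$ would kill the relator, forcing $[P,y]=1$ in $F_2$), so every component of $X(W)$ is at most two-dimensional.

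Next I would pin down the component $X_1$ carrying the irreducible characters. An irreducible $\rho$ has $\rho(y) \neq \pm I$, so on the dense open set $b \neq \pm 2$ we may conjugate $\rho(y)$ to $\mathrm{diag}(s, s^{-1})$ with $s^2 \neq 1$, and then the relator $[P,y]=1$ says exactly that $\rho(P)$ is diagonal. Writing the entries of $\rho(x)$ as $p,q,r,t$ with $pt - qr = 1$, one computes the two off-diagonal entries of the length-seven word $\rho(P)$ and sets them to zero; together with the residual $\CC^*$ of diagonal conjugations (which rescales $q$ and $r$), this describes the irreducible locus. The technical heart of the proof --- and the step I expect to be the main obstacle --- is to show that these off-diagonal equations, once the reducible locus is factored out, cut out an \emph{irreducible rational surface}: concretely, one solves them to express a coordinate such as $c$ as a rational function of $s$ and $a$ on the irreducible stratum, producing a rational parametrization of the closure $X_1$ by two parameters. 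Irreducibility then follows, and $\dim X_1 = 2$ (also forced a priori since $W$ has two torus cusps, cf.\ \cite[Proposition 2.4]{CCGLS}).

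Finally I would assemble the pieces. Every reducible character lies in $X_0$; every irreducible character, being a limit of irreducible characters with $b \neq \pm 2$, lies in $X_1$ by the parametrization (the parabolic stratum $b = \pm 2$ is nowhere dense among irreducible characters and contributes nothing new). Hence $X(W) = X_0 \cup X_1$: any component either is contained in the reducible locus $X_0$, in which case it equals $X_0$, or contains an irreducible character and therefore, being two-dimensional and swept out by the rational parametrization of $X_1$, equals $X_1$. The two components are distinct because the holonomy character of the complete hyperbolic structure on $W$ is irreducible and so lies in $X_1 \sm X_0$. The one place where genuine care is needed, besides the irreducibility of the surface cut out by the off-diagonal equations, is to check that the degenerate strata --- where $\rho(y)$ is parabolic, or where some intermediate matrix in the word $P$ becomes special --- do not produce extra components but instead sit inside $\overline{X_1}$.
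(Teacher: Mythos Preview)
Your outline is correct, and the overall strategy---parametrize the irreducible representations in a suitable slice and show the parametrizing variety is irreducible---is exactly what the paper does. The execution differs in a couple of places worth noting.

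For $X_0$, your observation that the relator is $[P,y]$ with $P = xy^{-1}x^{-1}yx^{-1}y^{-1}x$, hence lies in $[F_2,F_2]$, is a cleaner route than the paper takes: the paper simply parametrizes reducible characters directly by diagonals $(u,v)\in\CC^*\times\CC^*$ and does not pass through the trace-coordinate description $a^2+b^2+c^2-abc-4=0$ at all. Your irreducibility argument for that quadric is tidy and buys you $X_0$ for free.

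For $X_1$, the paper chooses a different normal form. Rather than diagonalizing $\rho(y)$ and forcing $\rho(P)$ to be diagonal, it conjugates a non-parabolic $\rho$ so that $\rho(x)=\left(\begin{smallmatrix}u&s\\0&u^{-1}\end{smallmatrix}\right)$ is upper triangular and $\rho(y)=\left(\begin{smallmatrix}v&0\\t&v^{-1}\end{smallmatrix}\right)$ is lower triangular. The relation then collapses to a \emph{single} explicit polynomial $f(s,t,u,v)$, cubic in the product $st$; normalizing $s=1$ (resp.\ $t=1$) via the residual diagonal conjugation, the paper asserts that $f(1,t,u,v)$ is irreducible and reads off $X_1$ as an irreducible surface with two affine charts $U_1,U_2$. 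This is precisely the step you flag as ``the main obstacle,'' and the paper's normal form has the practical advantage that it yields one equation immediately, whereas your scheme produces two off-diagonal conditions that must first be combined before irreducibility can be tested. Neither argument actually \emph{proves} the irreducibility in print---the paper just writes down $f$ and states it---but writing down the polynomial makes the claim directly verifiable.

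One small caution: your argument that $X(W)\subsetneq\CC^3$ via ``otherwise $[P,y]=1$ in $F_2$'' needs the fact that a nontrivial element of $F_2$ survives in some irreducible $\SLC$-representation; this is standard, but the paper sidesteps it entirely by working at the level of representations rather than trace coordinates.
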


\begin{proof}
We begin by describing the component $X_1$ which contains all the irreducible characters.
Since $\pi_1(W)$ admits a presentation with two generators and one relation, any representation
$\rho \colon \pi_1(W) \to \SLC$ which is not parabolic (defined below) can be conjugated so that 
\begin{equation} \label{xy-image}
\rho(x) = \begin{pmatrix} u & s \\ 0 & u^{-1} \end{pmatrix}  \quad \text{ and } \quad \rho(y) = \begin{pmatrix} v & 0 \\ t & v^{-1} \end{pmatrix}
\end{equation}
for $u,v \in \CC^*$ and $s,t \in \CC.$
Then $\rho$ 
satisfies the relation \eqref{plug-presentation} whenever $u,v,s,t$ satisfy $f(s,t,u,v)=0$, where
\begin{equation}
\begin{split}
f(s,t,u,v)& ~=~ u^2 v^2 (st)^3+ uv(u^2 v^2 - 2 u^2- 2 v^2 + 1)(st)^2  \\
& +(u^4 +v^4 - u^2 (v^2-1)^2- v^2(u^2 - 1)^2) s t  + uv(u^2-1)(v^2-1).
\end{split}
\end{equation}  

This leads to two components of solutions, and we start by describing the component $X_1$ which contains all the irreducible characters. Note that if $\rho$ is irreducible, then either $s\neq 0$ or $t\neq 0$. If $s \neq 0$, then
we can conjugate $\rho$ so that $s=1,$ and since $f(1,t,u,v)$ is irreducible, it follows that the set
$$U_1=\{ (t,u,v) \in \CC \times \CC^* \times \CC^* \mid f(1,t,u,v)=0\}$$
is an irreducible affine variety of dimension two.  Note that for a point $(t,u,v)\in U_1$, the associated character $\chi_\rho$ is reducible if and only if $t = 0$, and it is abelian if and only if $t=0$ and $v=\pm 1.$ 

Switching the roles of $s$ and $t$, we obtain a second affine variety $U_2$. Namely, assuming $t \neq 0$, we can conjugate $\rho$ so that $t=1$, and irreducibility of $f(s,1,u,v)$ implies that  
$$U_2=\{ (s,u,v) \in \CC \times \CC^* \times \CC^* \mid f(s,1,u,v)=0\}$$
is an irreducible affine variety of dimension two. Note again that for $(s,u,v) \in U_2,$ the associated character $\chi_\rho$ is reducible if and only if $s = 0$ and it is abelian if and only if $s=0$ and $u=\pm 1$.

The varieties $U_1$ and $U_2$ provide two affine charts for the first component $X_1$ of $X(W)$, and by construction $X_1$ contains all the irreducible characters. 

To understand the remaining component $X_0$ of $X(W)$ note that any reducible representation $\rho \colon \pi_1(W) \to \SLC$ (including parabolic representations) can be conjugated to be upper triangular. 
However, if
$$\rho(x) = \begin{pmatrix} u & * \\ 0 & u^{-1} \end{pmatrix}  \quad \text{ and } \quad \rho(y) = \begin{pmatrix} v & * \\ 0 & v^{-1} \end{pmatrix},
$$
then the character $\chi_\rho$ is equivalent to the character of an abelian representation $\rho'$
$$\rho'(x) = \begin{pmatrix} u & 0 \\ 0 & u^{-1} \end{pmatrix}  \quad \text{ and } \quad \rho'(y) = \begin{pmatrix} v & 0 \\ 0 & v^{-1} \end{pmatrix},
$$
Thus, every reducible
character $\chi_\rho \in X(W)$ is equivalent to the character of a  diagonal representation,
and any diagonal representation automatically satisfies  \eqref{plug-presentation}.
It follows that the component $X_0$ of reducible characters can be parameterized by $(u,v) \in \CC^* \times \CC^*$, which is clearly an affine variety of  dimension two.
Note further that the reducible characters can be identified with the characters of the representations satisfying \eqref{xy-image} with $s=t=0.$
\end{proof}

In the next lemma, we examine the reducible non-abelian representations of $\pi_1(W)$. 
\begin{lemma} \label{rednonab}
Suppose  $\rho \colon \pi_1(W) \to \SLC$ is a non-abelian reducible representation.
Then there are complex numbers $u,v \neq 0, \pm 1$ such that, up to conjugation,  either
$$\rho(x) = \begin{pmatrix} \pm 1 & 1 \\ 0 & \pm 1 \end{pmatrix}  \quad \text{ and } \quad \rho(y) = \begin{pmatrix} v & 0 \\ 0 & v^{-1} \end{pmatrix}$$
or  
$$\rho(x) = \begin{pmatrix} u & 0 \\ 0 & u^{-1} \end{pmatrix}  \quad \text{ and } \quad \rho(y) = \begin{pmatrix}  \pm 1 & 0 \\ 1 & \pm 1 \end{pmatrix}.$$

For any non-abelian reducible representation $\rho$, its character $\chi_\rho \in X_0 \cap X_1.$
Thus,  non-abelian reducible characters $\chi_\rho$ lie in the closure of the irreducible characters $X^*(W).$
\end{lemma}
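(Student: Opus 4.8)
The plan is to prove the two assertions in turn. The classification of non-abelian reducible representations will come from putting such a $\rho$ into the normal form \eqref{xy-image} and reading off the constraint imposed by the polynomial $f$ from the proof of Lemma \ref{two-comps}; the statement about closures will then follow from the irreducibility of the component $X_1$.

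For the classification, I would first show that a non-abelian reducible $\rho\colon\pi_1(W)\to\SLC$ can, after conjugation, be put in the form \eqref{xy-image} with $s=0$ or $t=0$. Since $\rho$ is reducible it preserves a line $L\subset\CC^2$, so $\rho(x)$ and $\rho(y)$ both fix $L$; also neither equals $\pm I$, else $\rho$ has cyclic image. If both $\rho(x)$ and $\rho(y)$ were parabolic with $L$ as their unique fixed line, a direct computation with upper-triangular matrices shows they commute, so $\rho$ is abelian, a contradiction. Hence one of $\rho(x),\rho(y)$ is diagonalizable with distinct eigenvalues and so has a second fixed line $L'\neq L$; choosing coordinate axes appropriately along $L$ and $L'$ diagonalizes that matrix while keeping $L$ a coordinate axis, which is precisely \eqref{xy-image} with $s=0$ (if $\rho(x)$ was diagonalized) or $t=0$ (if $\rho(y)$ was). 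Now, by the computation in the proof of Lemma \ref{two-comps}, a representation of the form \eqref{xy-image} satisfies \eqref{plug-presentation} precisely when $f(s,t,u,v)=0$, and since $f$ depends on $s,t$ only through the product $st$, this reduces in either case to $f=uv(u^2-1)(v^2-1)=0$, so $u=\pm1$ or $v=\pm1$. In the case $t=0$, non-abelianity forces $s\neq0$ (otherwise $\rho$ is diagonal) and $v\neq\pm1$ (otherwise $\rho(y)=\pm I$ and $\rho$ has cyclic image); hence $u=\pm1$, and conjugating by a diagonal matrix rescales the off-diagonal entry of $\rho(x)$ to $1$. This is the first normal form, with $v\neq0,\pm1$. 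The case $s=0$ is entirely symmetric and yields the second normal form, with $u\neq0,\pm1$.

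For the second assertion, $\chi_\rho\in X_0$ is immediate from Lemma \ref{two-comps}, since $X_0$ contains every reducible character. To see $\chi_\rho\in X_1$, observe that, taking $s=1$, the representation in the first normal form corresponds to the point $(t,u,v)=(0,\pm1,v)$ of the affine chart $U_1$ of $X_1$, and indeed $f(1,0,\pm1,v)=0$; likewise the second normal form lies in the chart $U_2$. Thus $\chi_\rho$ lies on $X_1$, hence $\chi_\rho\in X_0\cap X_1$. Finally, by Lemma \ref{two-comps} the component $X_1$ is irreducible, and its reducible characters form the proper Zariski-closed subset $X_0\cap X_1$ (cut out in $U_1$ by $t=0$); therefore the irreducible characters are Zariski dense in $X_1$, and since $\chi_\rho$ lies on $X_1$ it lies in the closure of $X^*(W)$.

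I expect the only delicate point to be the reduction in the first step — verifying that a genuinely non-abelian reducible representation always admits the normal form \eqref{xy-image}. This rests on the elementary observation that two elements of $\SLC$ sharing a unique common invariant line, each being $\pm I$ or parabolic on that line, necessarily commute. Once this is granted, everything else is routine bookkeeping with the explicit polynomial $f$ and with the structure of $X_1$ already established in Lemma \ref{two-comps}.
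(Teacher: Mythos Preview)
Your proof is correct and follows essentially the same strategy as the paper's: reduce a non-abelian reducible $\rho$ to a normal form by conjugation, read off the eigenvalue constraint, and then identify the resulting character as a point of $U_1$ (or $U_2$) in $X_1$. The only notable difference is tactical: the paper first conjugates both $\rho(x)$ and $\rho(y)$ into a common upper-triangular Borel and then, assuming $u\neq\pm1$ (resp.\ $v\neq\pm1$), further conjugates by upper-triangular matrices to diagonalize $\rho(x)$ (resp.\ $\rho(y)$), finally re-verifying the relation \eqref{plug-presentation} directly to force $v=\pm1$ (resp.\ $u=\pm1$). You instead pass directly to the ``opposite-triangular'' form \eqref{xy-image} with $s=0$ or $t=0$ and invoke the polynomial $f$ already computed in Lemma~\ref{two-comps}, using that $f$ depends on $s,t$ only through $st$. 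Your route is marginally more economical since it recycles the earlier computation rather than rechecking the relation, and it lands directly in the charts $U_1,U_2$, making the membership $\chi_\rho\in X_1$ immediate; the paper's route has the minor advantage of not relying on the (implicit) ``if and only if'' in the derivation of $f$.
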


\begin{proof}
If $\rho(x)$ and $\rho(y)$ are set equal to either of the pairs of matrices above, one easily checks that \eqref{plug-presentation} is satisfied. We will show these are the only solutions possible for non-abelian reducible representations.

Suppose that $\rho$ is a non-abelian reducible representation, and conjugate it to be upper triangular. Then we have
$$\rho(x) = \begin{pmatrix} u & s \\ 0 & u^{-1} \end{pmatrix}  \quad \text{ and } \quad \rho(y) = \begin{pmatrix} v & t \\ 0 & v^{-1} \end{pmatrix},
$$
where $u,v\in \CC^*$ and $s,t \in \CC$. Note that if $u=\pm 1$ and $v=\pm 1$, then $\rho$ is parabolic and hence abelian. So either $u \neq \pm 1$ or $v \neq \pm 1$.

Assume first of all that $u \neq \pm 1$. Then we can conjugate by upper triangular matrices to arrange that $s=0.$ Since $t \neq 0$ (for otherwise $\rho$ is abelian), we can further conjugate by diagonal matrices and arrange that $t=1$. Then \eqref{plug-presentation} holds if and only if $v=\pm 1.$

If instead $v \neq \pm 1$, then we can conjugate by upper triangular matrices to arrange that $t=0.$ Since $s \neq 0$ (for otherwise $\rho$ is abelian), we can further conjugate by diagonal matrices and arrange that $s=1$. Then \eqref{plug-presentation} holds if and only if $u=\pm 1.$

Obviously, every non-abelian character $\chi_\rho$ lies on $X_0$, and it is equally clear by taking $s=0$ or $t=0$ in \eqref{xy-image}  that $\chi_\rho$ also lies on $X_1$. Further, $X_1$ equals the closure of $X^*(W)$, and that completes the proof of the lemma.
\end{proof}

In the following lemma, we describe the characters $\chi_\rho$ in the intersection $X_0 \cap X_1$ of the two components of $X(W)$ as consisting of reducible non-abelian characters together with the four characters of central representations. 

\begin{lemma} \label{intersect}
Every character in the intersection $X_0 \cap X_1$ is either the character $\chi_\rho$ of a reducible non-abelian representation as in Lemma \ref{rednonab}, or it is the character of a diagonal abelian representation  with $
\rho(x) =\pm I$ or $\rho(y) =\pm I$.
\end{lemma}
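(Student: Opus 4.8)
The plan is to identify $X_0\cap X_1$ with the reducible locus of $X_1$, to cut that locus out explicitly using the polynomial $f$ from the proof of Lemma~\ref{two-comps}, and then to match the resulting characters with the two forms in the statement. First, every character $\chi\in X_0\cap X_1$ is reducible: by Lemma~\ref{two-comps} the component $X_0$ is precisely the set of characters of diagonal representations $\rho_{u,v}$, where $\rho_{u,v}(x)=\bigl(\begin{smallmatrix} u & 0 \\ 0 & u^{-1}\end{smallmatrix}\bigr)$ and $\rho_{u,v}(y)=\bigl(\begin{smallmatrix} v & 0 \\ 0 & v^{-1}\end{smallmatrix}\bigr)$, and no irreducible character coincides with a reducible one (see \cite{CS83}). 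Hence $\chi=\chi_{\rho_{u,v}}$ for some $(u,v)\in\CC^*\times\CC^*$, and the whole content of the lemma is to determine which pairs $(u,v)$ can occur.

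Next, since $\chi\in X_1$, it is carried by a point of one of the affine charts $U_1,U_2$ of $X_1$ described in the proof of Lemma~\ref{two-comps}; say it comes from $(t,u,v)\in U_1$, so $f(1,t,u,v)=0$. (To make ``chart'' precise one writes $\chi=\lim_n\chi_{\sigma_n}$ with $\sigma_n$ irreducible, puts each $\sigma_n$ in the normal form \eqref{xy-image} with $(s_n,t_n)\neq(0,0)$, rescales into $U_1$ or $U_2$, and observes that boundedness of $\chi_{\sigma_n}(x)=u_n+u_n^{-1}$ and $\chi_{\sigma_n}(y)=v_n+v_n^{-1}$ keeps $u_n,v_n$ in a compact subset of $\CC^*$ and hence, via $f=0$, keeps $t_n$ bounded, so a subsequential limit lies in $U_1$.) By Lemma~\ref{two-comps} a point $(t,u,v)\in U_1$ carries a reducible character exactly when $t=0$; thus $t=0$, and then
\[
0=f(1,0,u,v)=uv\,(u^2-1)(v^2-1).
\]
Since $u,v\in\CC^*$, this forces $u=\pm1$ or $v=\pm1$. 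The case where $\chi$ instead comes from $U_2$ is identical, since $f(0,1,u,v)=uv\,(u^2-1)(v^2-1)$ as well.

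Finally I would match these characters to the statement. If $u=\pm1$ and $v\neq\pm1$, then $\rho_{\pm1,v}$ has $\rho(x)=\pm I$ and $\rho(y)=\bigl(\begin{smallmatrix} v & 0 \\ 0 & v^{-1}\end{smallmatrix}\bigr)$ with $v\neq0,\pm1$; comparing traces on an arbitrary word in $x,y$ (both representations being upper triangular with the same diagonal entries) shows $\chi_{\rho_{\pm1,v}}$ equals the character of the reducible non-abelian representation of the first type in Lemma~\ref{rednonab}, so $\chi$ is the character of such a representation. The case $v=\pm1$, $u\neq\pm1$ is symmetric and yields the second type of Lemma~\ref{rednonab}. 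If $u=\pm1$ and $v=\pm1$, then $\chi$ is the character of the central diagonal representation $\rho(x)=\pm I$, $\rho(y)=\pm I$, which is a diagonal abelian representation with $\rho(x)=\pm I$ (indeed also with $\rho(y)=\pm I$); these four central characters are exactly the ones not covered by Lemma~\ref{rednonab}, which is why the ``or'' in the statement is needed. This exhausts all cases.

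The step I expect to be the main obstacle is the one where a reducible character of $X_1$ must be located inside one of the charts $U_1,U_2$ (equivalently, where the reducible locus of $X_1$ is seen to be cut out by $t=0$, resp.\ $s=0$, so that the factorization $f(1,0,u,v)=uv(u^2-1)(v^2-1)$ becomes available); the boundedness-of-trace-coordinates argument sketched above handles this, and everything else is the bookkeeping of the trace computations already implicit in Lemmas~\ref{two-comps} and \ref{rednonab}.
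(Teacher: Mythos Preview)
Your proof is correct and takes essentially the same approach as the paper, whose one-line argument simply cites the description of the reducible and abelian loci of $U_1,U_2$ from the proof of Lemma~\ref{two-comps} together with Lemma~\ref{rednonab}; you make this explicit via the factorization $f(1,0,u,v)=uv(u^2-1)(v^2-1)$ and the case matching. Your limiting argument to ensure that every reducible character of $X_1$ is actually captured by one of the charts $U_1,U_2$ is more careful than the paper, which simply asserts that $U_1,U_2$ are affine charts for $X_1$ without further justification.
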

 
\begin{proof}
This follows immediately from Lemma \ref{rednonab} and from the description of the abelian representations in $U_1$ and $U_2$ in the proof of Lemma \ref{two-comps}
\end{proof}

We now provide a more detailed construction of the knot $K_n$, the $n$-twisted double of the knot $J$ in $S^3$. Denote the complement of $J$ by $M = S^3 \sm \tau(J).$ Let $\bmu,\bla$ be the meridian and longitude for $J$.
We can specify a homeomorphism $\varphi_n \colon \partial M \to \partial V$ that is unique up to isotopy by requiring $\varphi_n(\bmu) = \la_y$
and $\varphi_n(\bla) = y + n \la_y.$ The image of $\ell_1$ in $M \cup_{\varphi_n} V$ is a knot $K_n$ in $S^3$ that we call the $n$-twisted double of $J$.
Let $Z_n = S^3 \sm \tau(K_n)$ be the complement of the $n$-twisted double, and note that
 $$Z_n = M  \cup_{\varphi_n} W.$$

Using Seifert-van Kampen, we see that $$\pi_1(Z_n) = \pi_1(M) *_{(\varphi_n)_*} \pi_1(W).$$
Given a representation $\rho \colon \pi_1(Z_n) \to \SLC$, by restricting we obtain representations $\rho_1 \colon \pi_1(M) \to \SLC$
and $\rho_2 \colon \pi_1(W) \to \SLC$. Note that the representations $\rho_1, \rho_2$ satisfy
\begin{equation} \label{satisfaction}
\rho_1(\bmu) = \rho_2(\la_y) \quad \text{ and }\quad \rho_1(\bla) = \rho_2\left(y (\la_y)^n\right),
\end{equation}
and that any pair  $(\rho_1,\rho_2) \in R(M) \times R(W)$ of representations satisfying \eqref{satisfaction} uniquely determines a 
representation $\rho \colon \pi_1(Z_n) \to \SLC$. In this case, we write $\rho = \rho_1 * \rho_2$.

\begin{figure}[hb]
\begin{center}
\leavevmode\hbox{}
\includegraphics[scale=1.20]{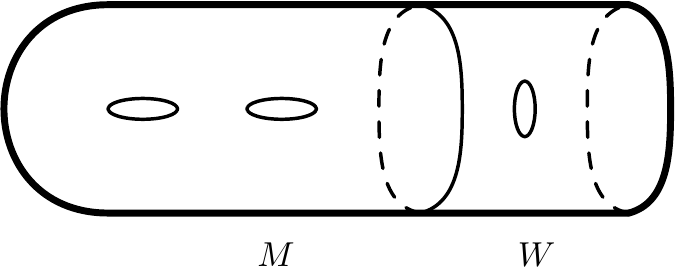}
\caption{The complement $Z_n = S^3 \sm \tau(K_n)$ of the $n$-twisted Whitehead double.} 
\label{twisted-double}
\end{center}
\end{figure}

\begin{lemma} \label{irred-one}
If $\rho \colon \pi_1(Z_n) \to \SLC$ is non-abelian, then its restriction $\rho_2= \rho|_{\pi_1(W)}$ to $W$ is non-abelian. If $\rho_2$ is reducible and non-abelian, then $\chi_{\rho_2}(y) = \pm 2$ and $\chi_{\rho_2}(\la_y) = 2$.
\end{lemma}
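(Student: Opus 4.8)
The plan is to argue about the restriction $\rho_2 = \rho|_{\pi_1(W)}$ using the fact that the meridian of $K_n$ is carried by the factor $\pi_1(M)$ and that $\pi_1(W)$ sees the rest of the knot group through the gluing \eqref{satisfaction}. First I would prove the first assertion. Suppose toward a contradiction that $\rho_2$ is abelian. By Lemma \ref{two-comps}, every abelian character of $\pi_1(W)$ is the character of a diagonal representation, so $\rho_2$ sends both $y$ and $\la_y$ (indeed all of $\pi_1(W)$) into a common one-parameter diagonal subgroup; in particular $\rho_2$ has abelian (hence cyclic up to conjugacy) image. Combined with \eqref{satisfaction}, which forces $\rho_1(\bmu) = \rho_2(\la_y)$ and $\rho_1(\bla) = \rho_2(y\la_y^n)$ to be diagonal, and with the fact that the meridian $\bmu$ of $J$ \emph{normally generates} $\pi_1(M)$, I would then deduce that $\rho_1$ — and therefore $\rho = \rho_1 * \rho_2$ — is abelian, contradicting the hypothesis. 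The one subtlety here is that $\rho_1$ need not itself be abelian just because $\rho_1(\bmu)$ is; I would handle this by noting that $\rho_1$ extends to a representation of $\pi_1(Z_n)$ whose restriction to $\pi_1(W)$ is abelian, so the image of $\rho$ is generated by the (abelian) image of $\pi_1(W)$ together with $\rho_1(\pi_1(M))$, and since $\bmu$ normally generates $\pi_1(M)$ while $\rho(\bmu) = \rho_2(\la_y)$ is central in the image of $\pi_1(W)$, one gets that the whole image is abelian. This is the step I expect to require the most care.

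For the second assertion, assume $\rho_2$ is reducible and non-abelian, and apply Lemma \ref{rednonab}. In both of the two normal forms listed there, the element $y$ is sent either to a parabolic matrix $\begin{pmatrix} \pm 1 & 0 \\ 1 & \pm 1 \end{pmatrix}$ or to a diagonal matrix $\begin{pmatrix} v & 0 \\ 0 & v^{-1}\end{pmatrix}$. To pin down $\tr\rho_2(y) = \pm 2$ I would observe that the gluing data constrains which normal form can occur: by \eqref{satisfaction} we have $\rho(\bmu) = \rho_2(\la_y)$, and since $\bmu$ normally generates $\pi_1(M)$ and $\rho$ is non-abelian, $\rho_2(\la_y) \neq \pm I$ cannot be central — but in the first normal form of Lemma \ref{rednonab}, $\la_y$ evaluates (using the word \eqref{longitudes} for $\la_y$) to $\pm I$, which would be fine, whereas in the second normal form, tracking through the commutator-type word for $\la_y$ one finds $\rho_2(y) = \begin{pmatrix} \pm 1 & 0 \\ 1 & \pm 1\end{pmatrix}$ and hence $\tr \rho_2(y) = \pm 2$ directly; so I need to rule out the first case or show it also gives $\tr\rho_2(y) = \pm 2$. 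In fact in the first normal form $\rho_2(y)$ is diagonal with entries $v, v^{-1}$ for $v \neq 0, \pm 1$, so $\tr \rho_2(y) \neq \pm 2$; thus I must show this case is incompatible with $\rho$ being non-abelian — which it is, because then $\rho_2(x) = \begin{pmatrix} \pm 1 & 1 \\ 0 & \pm 1 \end{pmatrix}$ and a direct computation from \eqref{longitudes} gives $\rho_2(\la_y) = I$, forcing $\rho(\bmu) = I$ and (as $\bmu$ normally generates $\pi_1(M)$) $\rho_1 = 1$, so $\rho = \rho_2$ is reducible — consistent — but then $\rho$ factors through $\pi_1(W)$ and is reducible, which contradicts non-abelian only if $\rho$ is required irreducible; since the hypothesis is merely ``non-abelian,'' I instead observe $\rho = \rho_2$ in this case is the non-abelian reducible $\rho_2$ itself, and recompute: here $y\mapsto \mathrm{diag}(v,v^{-1})$ would still give $\tr\rho(y)\neq\pm2$, so this case genuinely must be excluded, which I do by noting $\rho(\bmu) = \rho_2(\la_y) = I$ means $\bmu$ dies, so $\rho$ factors through $\pi_1(Z_n)/\langle\langle \bmu \rangle\rangle$; but $\bmu$ is a meridian of the \emph{satellite} $K_n$, not of $J$, and normally generates $\pi_1(Z_n)$, forcing $\rho$ trivial — contradiction.

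Once $\tr \rho_2(y) = \pm 2$ is established, the value $\chi_{\rho_2}(\la_y) = 2$ is essentially automatic: in the surviving normal form, $\rho_2(\la_y)$ is, by the explicit word \eqref{longitudes} and the parabolic shape of $\rho_2(y)$ and $\rho_2(x)$, either $I$ or a parabolic (trace $2$) matrix, and a short trace computation using the relation \eqref{plug-presentation} shows it is $2$. I would write this out as a direct matrix computation with the normal-form matrices of Lemma \ref{rednonab}. The overall structure of the argument is therefore: (i) use ``$\bmu$ normally generates'' plus \eqref{satisfaction} to propagate abelianness/triviality between the two pieces; (ii) invoke Lemmas \ref{two-comps} and \ref{rednonab} to enumerate the possible shapes of $\rho_2$; (iii) eliminate the diagonal-$y$ shape by the gluing constraint; (iv) read off the trace values from the remaining parabolic normal form via \eqref{longitudes}. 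The main obstacle, as noted, is step (i): carefully justifying that abelianness of $\rho_2$ forces abelianness of $\rho$ even though $\rho_1$ could a priori be non-abelian — this hinges on the normal generation of $\pi_1(M)$ by $\bmu$ and the centrality of $\rho_2(\la_y)$ in the image of $\pi_1(W)$.
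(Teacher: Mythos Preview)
Your argument for the first assertion has a genuine gap. You correctly note that if $\rho_2$ is abelian then $\rho_1(\bmu)=\rho_2(\la_y)$ lies in the (abelian) image of $\rho_2$, but from there you try to deduce that the whole image of $\rho$ is abelian via a centrality argument. That step fails: the fact that $\bmu$ normally generates $\pi_1(M)$ says only that $\pi_1(M)$ is generated by \emph{conjugates} of $\bmu$, and $\rho_1$ applied to those conjugates can perfectly well generate a non-abelian group even when $\rho_1(\bmu)$ is diagonal. (Think of any irreducible $\rho_1$ with $\rho_1(\bmu)$ diagonalized.) The missing observation---and this is what the paper uses---is that the explicit word \eqref{longitudes} exhibits $\la_y = [y^{-1},x^{-1}][y^{-1},x]$ as a product of commutators, so an abelian $\rho_2$ gives $\rho_2(\la_y)=I$, not merely diagonal. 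Then $\rho_1(\bmu)=I$, so $\rho_1$ is \emph{trivial} (not just abelian), the image of $\rho$ coincides with the abelian image of $\rho_2$, and you are done.

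For the second assertion, your case split via Lemma~\ref{rednonab} is a legitimate alternative to the paper's direct upper-triangular computation, and your claim that $\rho_2(\la_y)=I$ in the diagonal-$y$ case is correct. But your elimination of that case goes wrong at the end: you assert that ``$\bmu$ is a meridian of the satellite $K_n$ \ldots\ and normally generates $\pi_1(Z_n)$.'' It is not. In this paper $\bmu=\mu_J$ is the meridian of the companion $J$ and normally generates $\pi_1(M)$, not $\pi_1(Z_n)$; the meridian of $K_n$ is $x$. So $\rho_1(\bmu)=I$ forces only $\rho_1$ trivial, which you already had. The correct finish---and again this is what the paper does---is to feed $\rho_1$ trivial back into the \emph{second} gluing equation in \eqref{satisfaction}: $\rho_2(y)\rho_2(\la_y)^n=\rho_1(\bla)=I$ together with $\rho_2(\la_y)=I$ forces $\rho_2(y)=I$, contradicting $v\neq\pm1$. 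Once that case is excluded, your reading of $\chi_{\rho_2}(y)=\pm2$ and $\chi_{\rho_2}(\la_y)=2$ from the surviving normal form is fine.
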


\begin{proof}
Suppose $\rho_2$ is abelian.
By \eqref{longitudes} we see that $\la_y$ is a product of commutators,
and it follows that $\rho_2(\la_y)=I$. Thus \eqref{satisfaction} implies that $\rho_1(\bmu) = I$, and since $\bmu$ normally generates $\pi_1(M)$, it follows that $\rho_1$ is trivial. But this implies that $\rho =\rho_1 * \rho_2$ is abelian, a contradiction.

Now suppose $\rho_2$ is reducible and non-abelian. Conjugating, we may assume that $\rho_2$ is upper triangular with
$$\rho_2 (x) = \begin{pmatrix} u & 1 \\ 0 & u^{-1} \end{pmatrix} \quad \text{ and } \quad \rho_2 (y) = \begin{pmatrix} v & t \\ 0 & v^{-1} \end{pmatrix}.$$
Using \eqref{longitudes}, one easily sees that $\rho(\la_y)$ must be upper triangular with trace 2. 
Since $\rho_2(y)$ commutes with $\rho_2(\la_y)$, either $\rho_2(\la_y)=I$ or $v = \pm 1$. 

If $\rho_2(\la_y)=I$, then \eqref{satisfaction} shows that $\rho_1(\bmu) = I$, which implies $\rho_1$ is trivial. Further $\rho_2(y)=\rho_1(\bla)\rho_2(\la_y)^{-n}=I$, which shows that $\rho_2$ is abelian, a contradiction.  Thus $v = \pm 1$, and the lemma follows.
\end{proof}

Recall that $X^*(G)$ denotes the subset of characters $\chi_\rho$ of irreducible representations $\rho \colon G \to \SLC$. For the character variety of the $n$-twisted double $K_n$, we define
\begin{eqnarray*}
X^{*,ab}(Z_n) &=& \{ \chi_\rho \in X^*(Z_n) \mid \rho|_{\pi_1(M)} \text{ is abelian} \} \\
X^{*,na}(Z_n) &=& \{ \chi_\rho \in X^*(Z_n) \mid \rho|_{\pi_1(M)} \text{ is non-abelian} \}. 
\end{eqnarray*}

Let $U_n$ be the $n$-twisted double of the unknot,   
and observe that $U_n$ is a twist knot.

\begin{lemma} \label{nonabel}
$X^{*,ab}(Z_n) \cong X^*(U_n).$
\end{lemma}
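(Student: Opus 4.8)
The plan is to identify both sides with the same set of characters of $\pi_1(W)$, namely $\{\chi_{\rho_2} : \rho_2 \in R^*(W),\ \rho_2(y(\la_y)^n) = I\}$, and to check the resulting correspondence is regular in both directions. On the $U_n$ side, since $J$ is the unknot, $M$ is a solid torus with $\pi_1(M) = \langle \bmu\rangle \cong \ZZ$ and $\bla = 1$, so Seifert--van Kampen together with \eqref{satisfaction} shows that $\pi_1(U_n)$ is the quotient of $\pi_1(W)$ by the normal closure of $y(\la_y)^n$, with meridian corresponding to $\la_y$; hence $X^*(U_n)$ is exactly the set of characters $\chi_{\rho_2}$ of irreducible $\rho_2\colon \pi_1(W) \to \SLC$ with $\rho_2(y(\la_y)^n) = I$. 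On the $Z_n$ side, suppose $\rho = \rho_1 * \rho_2 \in R^*(Z_n)$ with $\rho_1 = \rho|_{\pi_1(M)}$ abelian. Then $\rho_1$ factors through $H_1(M) = \ZZ\langle\bmu\rangle$, so $\rho_1(\bla) = I$ because the longitude of a knot is nullhomologous, and \eqref{satisfaction} forces $\rho_2(\la_y) = \rho_1(\bmu)$ and $\rho_2(y(\la_y)^n) = \rho_1(\bla) = I$. Conversely, an abelian representation of $\pi_1(M)$ is uniquely determined by its value on $\bmu$, so from any $\rho_2 \in R(W)$ with $\rho_2(y(\la_y)^n) = I$ one recovers $\rho = \rho_1 * \rho_2$ by taking $\rho_1$ to be the abelian representation with $\rho_1(\bmu) = \rho_2(\la_y)$; compatibility \eqref{satisfaction} then holds since $\rho_1(\bla) = I = \rho_2(y(\la_y)^n)$. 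This gives a bijection between $\{\rho \in R(Z_n) : \rho|_{\pi_1(M)}\text{ abelian}\}$ and $\{\rho_2 \in R(W) : \rho_2(y(\la_y)^n) = I\}$ that is regular in both directions, since positive and negative powers of an $\SLC$ matrix depend polynomially on its entries.

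Next I would match irreducibles on the two sides. If $\rho_2$ is irreducible, then $\rho = \rho_1 * \rho_2$ restricts to the irreducible $\rho_2$ on $\pi_1(W) \subset \pi_1(Z_n)$, so $\rho$ is irreducible. For the converse, suppose $\rho$ is irreducible with $\rho|_{\pi_1(M)}$ abelian. By Lemma~\ref{irred-one} $\rho_2$ is non-abelian, and I claim it is in fact irreducible. If not, Lemma~\ref{irred-one} gives $\chi_{\rho_2}(\la_y) = 2$, so $\rho_2(\la_y)$ is either $I$ or a nontrivial parabolic. In the first case $\rho_1(\bmu) = \rho_2(\la_y) = I$ makes $\rho_1$ trivial, hence $\rho$ has the same image as $\rho_2$ and so is reducible, a contradiction. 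In the second case $\rho_1(\bmu)$ is a nontrivial parabolic, so the abelian representation $\rho_1$ has image in the centralizer of $\rho_1(\bmu)$ and therefore fixes exactly the eigenline of $\rho_1(\bmu)$; meanwhile the reducible non-abelian $\rho_2$ fixes a unique line (otherwise it would be diagonalizable, hence abelian), which must be the eigenline of the parabolic $\rho_2(\la_y) = \rho_1(\bmu)$. Thus $\rho_1$ and $\rho_2$ share a fixed line and $\rho$ is reducible, again a contradiction. Hence $\rho_2$ is irreducible.

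Finally, passing to characters is routine: all representations in play are irreducible, so a character determines its representation up to conjugacy, and conjugating $\rho$ by $g \in \SLC$ conjugates both $\rho_2$ and the uniquely determined $\rho_1$ by $g$; therefore the representation-level bijection descends to a regular isomorphism of $X^{*,ab}(Z_n)$ onto $\{\chi_{\rho_2} \in X^*(W) : \rho_2(y(\la_y)^n) = I\} = X^*(U_n)$, sending $\chi_\rho$ to the character $\chi_{\rho_2}$ viewed on $\pi_1(U_n)$. I expect the one genuinely delicate point to be the converse direction of the irreducibility comparison, that is, ruling out an irreducible $\rho$ on $Z_n$ with abelian restriction to $\pi_1(M)$ but only reducible non-abelian restriction to $\pi_1(W)$; this is precisely where Lemma~\ref{irred-one} and the trace value $\chi_{\rho_2}(\la_y) = 2$ are used, forcing the peripheral parabolic element to be shared and hence producing a common eigenline.
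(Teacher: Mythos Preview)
Your proof is correct and takes essentially the same approach as the paper, which simply observes that an abelian $\rho_1$ factors through $H_1(M)\cong\ZZ$, so that representations of $\pi_1(Z_n)=\pi_1(M)*_{(\varphi_n)_*}\pi_1(W)$ abelian on $\pi_1(M)$ correspond bijectively to representations of $\ZZ*_{(\varphi_n)_*}\pi_1(W)=\pi_1(S^3\sm\tau(U_n))$. Your irreducibility comparison, while correct, can be shortened considerably: since $\rho_1$ abelian gives $\im(\rho_1)=\langle\rho_1(\bmu)\rangle=\langle\rho_2(\la_y)\rangle\subset\im(\rho_2)$, the images of $\rho$ and of $\rho_2$ coincide, so $\rho$ is irreducible if and only if $\rho_2$ is, and no appeal to Lemma~\ref{irred-one} or case analysis on parabolics is needed.
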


\begin{proof}
A representation $\rho_1 \colon \pi_1(M) \to \SLC$ is abelian if and only if it factors through the abelianization map $\pi_1(M) \to H_1(M) \cong \ZZ$.
Thus, irreducible representations  $\rho$ of $\pi_1(Z_n) = \pi_1(M) *_{(\varphi_n)_*} \pi_1(W)$ which are abelian on $\pi_1(M)$ 
are in one-to-one correspondence with representations of  $\ZZ *_{(\varphi_n)_*} \pi_1(W) = \pi_1(S^3 \sm \tau(U_n)).$ 
\end{proof}

The next result gives a slightly stronger statement for untwisted doubled knots $K_0$.
Henceforth for such knots we omit the 0-subscripts, writing $K$ rather than $K_0$ for the untwisted double and $Z$ rather than $Z_0$ for its complement.

\begin{proposition} \label{prop-untwistirr}
Let $J$ be a knot in $S^3$ and $M = S^3 \sm \tau(J)$ its complement. Let $K$ be the untwisted double of $J$ and $Z=S^3 \sm \tau(K)$ its complement.  
\begin{enumerate}
\item[(i)] If $\rho \colon \pi_1(Z) \to \SLC$ is irreducible, then  $\rho_1 = \rho|_{\pi_1(M)}$ is irreducible. 
\item[(ii)] If $\rho \colon \pi_1(Z) \to \SLC$ is reducible, then   
$\rho_1$ is trivial and $\rho$ is abelian.
\end{enumerate}
\end{proposition}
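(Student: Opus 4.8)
The plan is to exploit the amalgamated product structure $\pi_1(Z) = \pi_1(M) *_{(\varphi_0)_*}\pi_1(W)$ together with the fact that the gluing is \emph{untwisted}: by \eqref{satisfaction} with $n=0$, a representation $\rho = \rho_1 * \rho_2$ satisfies $\rho_1(\bmu) = \rho_2(\la_y)$ and, crucially, $\rho_1(\bla) = \rho_2(y)$ with no correction term involving $\la_y$. I will combine this with two elementary facts about the knot complement $M = S^3\sm\tau(J)$. First, the longitude $\bla$ lies in the second commutator subgroup $\pi_1(M)^{(2)}$ (the same fact invoked in the proof of Theorem~\ref{A-hat-connect}), so that $\rho_1(\bla)=I$ whenever $\rho_1$ is reducible: such a representation sends $\pi_1(M)^{(1)}$ into a unipotent, hence abelian, subgroup, and therefore kills $\pi_1(M)^{(2)}$. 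Second, the meridian $\bmu$ normally generates $\pi_1(M)$, so $\rho_1(\bmu)=I$ forces $\rho_1$ to be trivial.

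To prove (i), suppose $\rho$ is irreducible but $\rho_1 = \rho|_{\pi_1(M)}$ is reducible. After conjugating $\rho$ we may assume $\rho_1$ is upper triangular, so $\rho_1(\bla)=I$ by the first fact, and hence $\rho_2(y) = \rho_1(\bla) = I$. Since $\pi_1(W) = \langle x,y\rangle$ by \eqref{plug-presentation}, the image $\rho_2(\pi_1(W)) = \langle \rho_2(x)\rangle$ is cyclic, so $\rho_2$ is abelian. But $\rho$ is non-abelian, being irreducible, so $\rho_2$ is non-abelian by Lemma~\ref{irred-one} --- contradicting what we just showed. Hence $\rho_1$ is irreducible.

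To prove (ii), suppose $\rho$ is reducible, and conjugate $\rho$ to be upper triangular; then $\rho_1$ is upper triangular, so $\rho_1(\bla) = I$ and therefore $\rho_2(y) = I$. As in (i) this makes $\rho_2$ abelian, so $\rho_2(\la_y) = I$ because $\la_y$ is a product of commutators (visible from \eqref{longitudes}). The gluing relation then gives $\rho_1(\bmu) = \rho_2(\la_y) = I$, so $\rho_1$ is trivial by the second fact. Finally $\pi_1(Z)$ is generated by the images of $\pi_1(M)$ and $\pi_1(W)$, so $\rho(\pi_1(Z)) = \rho_2(\pi_1(W))$ is abelian, i.e.\ $\rho$ is abelian.

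The main obstacle --- and the reason the proposition is stated only for untwisted doubles --- is controlling $\rho_1(\bla)$ through the gluing. For a twisted double the identification reads $\rho_1(\bla) = \rho_2(y\la_y^{\,n})$, which no longer forces $\rho_2(y) = I$ (e.g.\ $\rho_2$ may be irreducible with $\rho_2(\la_y)\neq I$), and so the twisted case requires a separate analysis. Even in the untwisted case the argument genuinely uses $\bla \in \pi_1(M)^{(2)}$ rather than merely $\bla \in \pi_1(M)^{(1)}$: for a \emph{reducible non-abelian} $\rho_1$ the image of $\pi_1(M)^{(1)}$ is a nontrivial unipotent subgroup, and the second-commutator-subgroup fact is exactly what eliminates this possibility.
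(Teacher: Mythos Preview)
Your proof is correct and follows essentially the same line as the paper's: the key ingredients (that $\bla\in\pi_1(M)^{(2)}$ forces $\rho_1(\bla)=I$ when $\rho_1$ is reducible, that the untwisted gluing then gives $\rho_2(y)=I$, that $\la_y$ is a commutator so $\rho_2(\la_y)=I$, and that $\bmu$ normally generates $\pi_1(M)$) are exactly those used in the paper. The only difference is organizational: the paper proves the single statement ``if $\rho_1$ is reducible then $\rho_1$ is trivial and $\rho$ is abelian,'' which yields (i) and (ii) simultaneously, whereas you treat the two parts separately and invoke Lemma~\ref{irred-one} as a shortcut in (i).
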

 
\begin{proof} Given a representation $\rho \colon \pi_1(Z) \to \SLC$ with $\rho_1 = \rho|_{\pi_1(M)}$ reducible, we show $\rho_1$ is trivial and $\rho$ is abelian. This will establish both claims of the proposition. 

Suppose $\rho_1$ is reducible. 
Since $\bla$ lies in the second commutator subgroup of $\pi_1(M)$, it follows that $\rho_1(\bla)=I$, and 
\eqref{satisfaction} shows that $\rho_2(y)=I$. Equation \eqref{longitudes} implies $\rho_2(\la_y) =I$, and applying \eqref{satisfaction} again shows that $\rho_1(\bmu)=I$. Since $\bmu$ normally generates $\pi_1(M)$, this implies $\rho_1$ is trivial.  Moreover since $\rho_2(y)=I$ and $\pi_1(W)$ is generated by $x$ and $y$, the image of $\rho=\rho_1 * \rho_2$ is generated by $\rho_2(x)$, and it follows that $\rho$ is abelian.
 \end{proof}

The Alexander polynomial of the $n$-twisted Whitehead double $K_n$ is given by $$\Delta_{K_n}(t)= nt^2+(1-2n)t +n.$$ 
By Proposition 6.1 of \cite{CCGLS}, any reducible non-abelian representation $\rho \colon \pi_1(Z_n) \to \SLC$ has meridional eigenvalue equal to a square root of $\Delta_{K_n}(t)$. Since the untwisted 
double $K$ has trivial Alexander polynomial, this shows that $\pi_1(Z)$ does not admit any non-abelian reducible $\SLC$ representations. In particular, every non-abelian representation  $\rho \colon \pi_1(Z) \to \SLC$ is automatically irreducible, and the component $Y_0 \subset X(Z)$ of reducible characters is  disjoint from the other components $Y_j \subset X(Z).$

The next result shows that for most
untwisted doubles, the variety of irreducible characters has no one-dimensional components.

\begin{proposition} \label{higher-dim}
Let $J$ be a knot in $S^3$ such that the $A$-polynomial is not divisible by $\ell - 1$, $\ell + 1$ or $\ell^2 + m$, and let $M = S^3 \sm \tau(K)$ and $r \colon X(M) \to X(\partial M)$. Suppose that every component $X_j$ of $X(M)$ has one-dimensional image $r(X_j)$. 

If $K$ denotes the untwisted double of $J$ and $Z =S^3 \sm \tau(K)$ its complement, then every component $Y_j$ of $X(Z)$ of irreducible characters has $\dim Y_j > 1$. 
\end{proposition}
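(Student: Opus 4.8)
The plan is to show, at a generic point $\chi_\rho$ of a given component $Y$ of irreducible characters, that $\dim_{\chi_\rho}X(Z)\ge 2$, by producing two transverse one‑parameter families of deformations: one by algebraic bending, one by deforming the two ``factor'' representations. \emph{Setup.} Write $T\subset\partial W$ for the $\ell_2$–boundary torus, so $\pi_1(T)=\langle y,\la_y\rangle$; since $n=0$ we have $Z=M\cup_{\varphi_0}W$ with $\varphi_0(\bmu)=\la_y$, $\varphi_0(\bla)=y$, so $\pi_1(Z)=\pi_1(M)*_{\pi_1(T)}\pi_1(W)$ is an amalgam over the $\ZZ^2$ subgroup $\pi_1(T)$, which $\varphi_0$ identifies with $\pi_1(\partial M)$. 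Let $r\colon X(M)\to X(\partial M)$ and $r_y\colon X(W)\to X(T)$ be the restriction maps, and $q_M\colon X(Z)\to X(M)$, $q_W\colon X(Z)\to X(W)$ the restrictions along $\pi_1(M)\hookrightarrow\pi_1(Z)$ and $\pi_1(W)\hookrightarrow\pi_1(Z)$. For a generic $\chi_\rho\in Y$ write $\rho=\rho_1*\rho_2$. By Proposition \ref{prop-untwistirr}(i), $\rho_1$ is irreducible, so $\chi_{\rho_1}$ lies on a component $X_j$ of $X(M)$ with $\dim X_j\ge 1$ and, by hypothesis, $r(X_j)$ a curve; since $X_j$ carries an irreducible character, $r(X_j)$ corresponds (under the two‑to‑one map $t$ of \cite{CCGLS}) to a factor of $A_J$. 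By Lemma \ref{irred-one}, $\rho_2$ is non‑abelian, hence $\chi_{\rho_2}\in X_1$ by Lemma \ref{two-comps}.

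\emph{First direction (bending).} Since $\bmu$ normally generates $\pi_1(M)$ and $\rho_1$ is irreducible, $\rho_1(\bmu)\ne\pm I$, so $\rho|_{\pi_1(T)}$ is noncentral and its centralizer $\Ga\subset\SLC$ is one‑dimensional. For $A\in\Ga$ set $\rho_A=\rho_1*(A\rho_2A^{-1})$: this is well defined (as $A$ centralizes $\rho_2(\pi_1(T))=\rho_1(\pi_1(\partial M))$) and irreducible, and $\rho_A$ is conjugate to $\rho_B$ only when $A=\pm B$ — a conjugacy restricts to a centralizer element of the irreducible $\rho_1$, hence is $\pm I$, and then to a centralizer element of the non‑abelian group $\rho_2(\pi_1(W))$, again $\pm I$. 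So $\{\chi_{\rho_A}\}$ is a one‑dimensional family on $Y$, lying in a single fiber of $q_M$ because every $\rho_A|_{\pi_1(M)}=\rho_1$ — exactly the technique used in the proof of Theorem \ref{A-hat-connect} (see Section 5 of \cite{JM}).

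\emph{Second direction, and the role of the hypotheses.} Here one deforms $\rho_1$ inside $X_j$ and $\rho_2$ inside $X_1$ compatibly: a nearby character $\chi_{\rho_1'}\in X_j$ extends over $\pi_1(Z)$ precisely when $r(\chi_{\rho_1'})$, read in $X(T)$ via $\varphi_0$, lies in $\mathrm{im}(r_y|_{X_1})$. The key input — to be extracted from the presentation \eqref{plug-presentation} and the longitude formulas \eqref{longitudes} — is that $r_y|_{X_1}$ is dominant and that, in the eigenvalue coordinates on $X(T)\cong X(\partial M)$ in which $\bmu\leftrightarrow\la_y$ has eigenvalue $m$ and $\bla\leftrightarrow y$ has eigenvalue $\ell$, its image omits only finitely many points together with the curves $\ell=1$, $\ell=-1$ and $\ell^2+m=0$ (the first two because the locus on $X_1$ where $\rho_2(y)$ has eigenvalue $\pm1$ is carried to finitely many points, by Lemmas \ref{two-comps} and \ref{irred-one}). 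Granting this, the hypothesis that $\ell-1$, $\ell+1$, $\ell^2+m$ do not divide $A_J$ ensures the curve $r(X_j)$ is none of these omitted curves, so $r(X_j)\cap\mathrm{im}(r_y|_{X_1})$ is dense in $r(X_j)$; hence a generic character of $X_j$ extends over $\pi_1(Z)$, and a one‑parameter deformation of $\rho_1$ inside $X_j$ lifts to a deformation of $\rho=\rho_1*\rho_2$ which — $\chi_\rho$ being generic — stays on $Y$ and moves $q_M(\chi_\rho)=\chi_{\rho_1}$, so is transverse to the bending family. Therefore $\dim_{\chi_\rho}X(Z)\ge 1+\dim X_j\ge 2$, and $\dim Y>1$.

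I expect the main obstacle to be the bracketed claim about $\mathrm{im}(r_y|_{X_1})$: pinning down the image of $r_y|_{X_1}$ in $X(T)$ and verifying that the curves it misses are exactly $\ell=\pm1$ and $\ell^2+m=0$. A secondary technical point is to make the dimension count rigorous at a generic smooth point of every component $Y$ — checking that the bending family and the factor‑deformation family are genuinely transverse and that the perturbed characters do not jump onto another component of $X(Z)$ — which amounts to controlling the local structure of the fiber product $X_j\times_{X(\partial M)}X_1$ near $(\chi_{\rho_1},\chi_{\rho_2})$.
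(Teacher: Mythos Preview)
Your proposal is correct and follows essentially the same route as the paper: identify the forbidden curves missed by $r_y|_{X_1}$, use the hypothesis on $A_J$ to ensure $r(X_j)$ is not one of them so that generic characters on $X_j$ extend over $\pi_1(Z)$, and then apply algebraic bending to produce a second dimension. Two remarks that would tighten your write-up: first, the ``bracketed claim'' you flag as the main obstacle is exactly Lemma~9.4 of Cooper--Long \cite{CL96}, which the paper cites directly rather than rederiving from \eqref{plug-presentation} and \eqref{longitudes}; second, the paper sidesteps your transversality concern by arguing via the map $f\colon Y_j\to X(T)$ sending $\chi_\rho\mapsto\chi_{\rho|_{\pi_1(T)}}$ --- the bending family sits in a single fiber of $f$ while the factor-deformation moves $f(\chi_\rho)$ along $r(X_j)$, so one-dimensional image plus one-dimensional generic fiber gives $\dim Y_j\ge 2$ without any local tangent-space analysis.
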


Note that most knots satisfy the above hypotheses; for a specific example take $J$ to be the trefoil.

\begin{proof}
We write $Z =M \cup_T W$, where $T$ is the 2-torus along which $M$ and $W$ are identified, and we 
use $r_1$ and $r_2$ to denote the restriction maps from $R(M)$ and $R(W)$ to $R(T)$, giving the diagram:

\begin{center}
\begin{tikzcd} 
R(M) \arrow{dr}{r_1}  && R(W)\arrow{dl}[swap]{r_2}  \\
&R(T) 
\end{tikzcd} 
\end{center}
We use this diagram to identify $R(Z)$ with pairs $(\rho_1, \rho_2) \in R(M) \times R(W)$ which satisfy $r_1(\rho_1) = r_2(\rho_2)$,
and in that case we write $\rho = \rho_1 * \rho_2$. Recall from the presentation \eqref{plug-presentation} of $\pi_1(W)$  and from the gluing equations \eqref{satisfaction} for the untwisted double that 
$$r_1(\rho_1) = r_2(\rho_2) \quad \text{ if and only if } \quad \rho_1(\bmu) = \rho_2(\la_y) \text{ and } \rho_1(\bla)= \rho_2(y).$$

Recall from Lemma \ref{two-comps} that $X(W) = X_0(W) \cup X_1(W)$. Further by Lemma \ref{rednonab},
every non-abelian reducible representation $\rho_2 \colon \pi_1(W)\to \SLC$ is a limit of irreducible representations.

In their proof of Lemma 9.4 of  \cite{CL96}, Cooper and Long show that the image of $X^*_1(W)$ under restriction $X(W) \to X(T)$ is
the Zariski-open subset given by the complement of the \emph{forbidden} curves $m-1, m+1, \ell+m^2$. Here $T \subset \partial W$ is the boundary of the component $\ell_2$ of the Whitehead link $L$ labeled $y$ in Figure \ref{Whitehead}, and $m$ is an eigenvalue of $\rho(y)$ and $\ell$ an eigenvalue of $\rho(\la_y).$
In particular any curve of representations of $\pi_1(T)$ with characters in the image of $X_1^*(W)$ extends continuously to a curve of representations of $\pi_1(W)$. 
We use this to show that every component $X_j \subset X(M)$ containing irreducible characters and with image $r(X_j) \subset X(T)$ not coincident with a forbidden curve gives rise to a
component $Y_j \subset X^*(Z)$ of dimension at least 2, and that every component of $X^*(Z)$ arises this way.
 
Let $X_j$ be a component of $X(M)$ containing an irreducible character, and let $R_j$ 
be the corresponding component in the space $R(M)$ of $\SLC$ representations, so that under $t \colon R(M) \to X(M)$, we have $t(R_j) = X_j.$ 
By hypothesis, the image $r(X_j)$ is one-dimensional and does not coincide with any of the forbidden curves $\ell -1, \ell +1,$ and $m+\ell^2$.
(Note that the meridian and longitude of the knot $J$ are opposite to those of $T \subset \partial W$, and thus we must exchange $\ell$ and $m$ when viewing the forbidden curves in $X(\partial M)$.)
Consequently, the intersection of $r(X_j)$ with the three forbidden curves consists of at most finitely many characters.
Set
$$R'_j = \{ \rho_1 \in R_j \mid \text{$\rho_1$ is irreducible and $r(\chi_{\rho_1})$ does not lie on a forbidden curve} \}.$$
Since $X_j$ contains only finitely many reducible characters, under the composition
$R_j \stackrel{t}{\lto} X_j \stackrel{r}\lto X(T)$, this excludes at most finitely many points from $r(X_j)$. Now the image $r(X_j)$ is one-dimensional by hypothesis, and so it follows that $R'_j$ is non-empty and Zariski-open in $R_j$.
Further, Lemma 9.4 of  \cite{CL96} shows that every $\rho_1 \in R'_j$ extends to a representation
$\rho \colon \pi_1(Z) \to \SLC$ whose restriction $\rho_2 = \rho|_{\pi_1(W)}$ is irreducible.

Let $Y_j \subset X(Z)$ be the component of irreducible characters $\chi_\rho$ with $\rho_1 \in R_j$, and let
$f \colon Y_j \to X(T)$ be the map given by $\chi_\rho \mapsto \chi_{\rho_0}$, where $\rho_0 = \rho|_{\pi_1(T)}$ is the restriction of $\rho$ to the splitting torus $T$. By the previous construction, we see that $f(Y_j)$ contains $(r\circ t)(R_j')$, which is a curve with at most finitely many points removed. 
In fact, the construction shows that a Zariski-open subset of $Y_j$ consists of characters $\chi_\rho$ with 
$\rho =\rho_1 * \rho_2$, where $\rho_1 \colon \pi_1(M) \to \SLC$ and $\rho_2 \colon \pi_1(W) \to \SLC$ are both irreducible representations.
By Lemma \ref{irred-one} and Proposition \ref{prop-untwistirr}, every irreducible character $\chi_\rho \in X(Z)$ is the character of a representation $\rho =\rho_1*\rho_2$ with $\rho_1$ irreducible and $\rho_2$ non-abelian. Let $R_j$ be the component of $R(M)$ containing $\rho_1$. If $\rho_2$ is irreducible, then $\chi_\rho \in Y_j$ for the component $Y_j \subset X(Z)$ constructed above. If instead $\rho_2$ is reducible and non-abelian, then it lies in the Zariski closure of $R'_j$,  and it follows that $\rho$ lies in the Zariski closure of $Y_j$.

We now use algebraic bending \cite{JM} to show that $\dim(Y_j)\geq 2.$  
Given $\rho=\rho_1 * \rho_2$ with $\rho_1$ and $\rho_2$ irreducible,  let $\Gamma_0$ be the stabilizer subgroup of the restriction $\rho_0 = \rho|_{\pi_1(T)}.$ Since $\pi_1(T)$ is abelian and $\rho_1$ is irreducible, it follows that $\Gamma_0$ is one-dimensional. (Indeed, $\Gamma_0$ is isomorphic to either $\CC^*$ or $\CC$ depending on whether $\rho_0$ is diagonal or parabolic.)
 For $A \in\Gamma_0$, define $\rho_A = \rho_1* (A\rho_2 A^{-1})$. Clearly then $\rho_A \colon \pi_1(Z) \to \SLC$ is irreducible and $f(\chi_{\rho_A}) = f(\chi_{\rho})$. On the other hand, for $A \neq \pm I,$ one can show that $\rho_A$ is not conjugate to $\rho$. This gives a one-dimensional family of irreducible characters in the fiber $f^{-1}(\chi_{\rho_0})$, and since $f(Y_j)$ is one-dimensional, this implies that $\dim Y_j>1.$  
%
\end{proof}
 
\begin{corollary}
If $J$ is a knot in $S^3$ satisfying the hypotheses of Proposition \ref{higher-dim}, then its untwisted  double $K$ has $\wh{A}_{K}(m,\ell) = 1$ and $\la'_{\SLC}(K) = 0$.
\end{corollary}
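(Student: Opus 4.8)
\emph{Proof proposal.} The plan is to read off both equalities directly from Proposition \ref{higher-dim} and the structure of $X(Z)$ described just before it; essentially no new argument is needed. Because the untwisted double $K$ has trivial Alexander polynomial, $\pi_1(Z)$ admits no non-abelian reducible $\SLC$ representations, so (as noted after Proposition \ref{prop-untwistirr}) the component $Y_0 \subset X(Z)$ of reducible characters is disjoint from the remaining components, each of which consists entirely of irreducible characters. By Proposition \ref{higher-dim}, every such component $Y_j$ has $\dim Y_j > 1$; hence $X(Z)$ has no one-dimensional component carrying an irreducible character. Since $\wh{A}_K(m,\ell)$ is by definition the product of the polynomials $\wh{A}_{K,Y_j}(m,\ell)$ taken over the one-dimensional components $Y_j \neq Y_0$ of $X(Z)$ with one-dimensional image $r(Y_j)$, this product is empty, so $\wh{A}_K(m,\ell) = 1$; in particular $\deg_m \wh{A}_K(m,\ell) = 0$.

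Next I would show that $\la_\SLC(Z_{p/q}) = 0$ for every slope $p/q$, where $Z_{p/q}$ denotes the result of $p/q$--Dehn surgery on $K$. Here I would quote the argument from the proof of the Claim verbatim: by Corollary 2 on p.~75 of \cite{Sf94}, a component $Y$ of $X(Z)$ with $\dim Y > 1$ meets $r^{-1}(t(F_{p/q}))$ in a set having no zero-dimensional component, and so contributes nothing to $\la_\SLC(Z_{p/q})$. By the previous paragraph every component of $X(Z)$ other than $Y_0$ has dimension greater than one, while $Y_0$ carries only reducible characters and hence also contributes nothing; therefore $\la_\SLC(Z_{p/q}) = 0$. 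Consequently
$$\la'_\SLC(K) = \lim_{q\to\infty}\tfrac1q\,\la_\SLC(Z_{p/q}) = 0,$$
the limit being trivially independent of $p$ and existing here simply because $\la_\SLC(Z_{p/q})$ vanishes identically, so that no appeal to condition $(*)$ is needed. (When $K$ additionally satisfies $(*)$, the vanishing $\la'_\SLC(K) = 0$ also follows at once from Corollary \ref{corollary-casson} and $\deg_m\wh{A}_K(m,\ell) = 0$.)

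I do not expect any real obstacle: the content is carried entirely by Proposition \ref{higher-dim}, which may be assumed, together with the fact --- already established in the proof of the Claim --- that components of dimension greater than one do not contribute to the $\SLC$ Casson invariant of a Dehn filling. The only point inherited from that proof which one should keep in mind is that zero-dimensional components of $X^*(Z_{p/q})$ are detected through the intersections $Y_j \cap r^{-1}(t(F_{p/q}))$, with the parabolic and central loci on $\pi_1(\partial Z)$ (handled exactly as in \cite{C01}) playing no role; since we use only the \emph{vanishing} of these contributions, this causes no difficulty.
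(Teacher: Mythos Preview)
Your proposal is correct and follows essentially the same approach as the paper: both deduce $\wh{A}_K(m,\ell)=1$ immediately from the absence of one-dimensional components, and both obtain $\la'_\SLC(K)=0$ by invoking the fact (from the proof of the Claim, via \cite{Sf94}) that higher-dimensional components contribute nothing to $\la_\SLC(Z_{p/q})$. The only cosmetic difference is that the paper first appeals to Corollary~\ref{corollary-casson} when $(*)$ holds and reserves the direct argument for the non-$(*)$ case, whereas you apply the direct argument uniformly and mention the $(*)$ route as an aside; your organization is arguably cleaner since the direct argument works in both cases.
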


\begin{proof}
Since $\wh{A}_K(m,\ell)$ is defined using only components of $X(Z)$ of dimension one, we see immediately that $\wh{A}_{K}(m,\ell) = 1$. The assertion $\la'_{\SLC}(K) = 0$ follows from Corollary \ref{corollary-casson} if $K$ satisfies condition $(*)$. If on the other hand $(*)$ does not hold, then no component $Y_j$ of $X(Z)$ with dimension greater than one contributes to $\la_\SLC(Z_{p/q})$ for any surgery $p/q$, as noted in the proof of Theorem \ref{thm-main}. Thus clearly $\la'_{\SLC}(K) = 0$ in this case too.
\end{proof}

Taking $J$ to be the trefoil, this implies that its untwisted double has trivial $\wh{A}$-polynomial and vanishing $\SLC$ Casson knot invariant. In particular, neither the $\wh{A}$-polynomial nor the $\SLC$ Casson knot invariant detect the unknot.

\bigskip
In conclusion, it would be interesting to find a way to incorporate higher-dimensional components of the $\SLC$ character variety $X(\Si)$ into the definition of the $\SLC$ Casson invariant for 3-manifolds $\Si$. The resulting invariant would coincide with $\la_\SLC(\Si)$ in the case the character variety $X(\Si)$ is zero-dimensional, and an intriguing problem would then be to establish a surgery formula for the new invariant and to define an associated invariant of knots. It is reasonable to believe that the knot invariant would be related to an appropriately defined generalization of the 
$A$-polynomial in much the same way that $\la'_\SLC$ is related to the $\wh{A}$-polynomial. In particular, since the $m$-degree of the $A$-polynomial is known to detect the unknot \cite{B12}, one would expect that an $\SLC$ knot invariant that incorporates higher-dimensional components of $X(M)$ would be a powerful tool in low-dimensional topology.

\bigskip
\noindent
{\it Acknowledgements.} Both authors would like to thank Steve Boyer, Marc Culler, Michael Heusener, Thang Le, Darren Long, and Steffen Marcus for generously sharing their expertise.  We would also like to thank the anonymous referee for pointing out an error in an earlier version of this paper. H. Boden gratefully acknowledges the support of a Discovery Grant from the Natural Sciences and Engineering Research Council of Canada.


\begin{thebibliography}{99}

\bibitem{B12}
H.~U. Boden, {\it  Nontriviality of the $M$-degree of the $A$-polynomial,}
Proc. Amer. Math. Soc. {\bf 142} (2014), no. 6, 2173--2177. \MR{3182034}

\bibitem{BC06}
H.~U. Boden and C.~L. Curtis, {\it The $SL_2(\CC)$ Casson invariant for Seifert fibered homology spheres and surgeries on twist knots,}
J. Knot Theory Ramific. {\bf 15} (2006), no. 7, 813--837. \MR{2264157 (2007g:57017)}


\bibitem{BC08}
H.~U. Boden and C.~L. Curtis, {\it Splicing and the $SL_2(\CC)$ Casson invariant,}
Proc. Amer. Math. Soc. {\bf 136} (2008), no. 7, 2615--2623. \MR{2390534 (2009c:57022)}

\bibitem{BC12}
H.~U. Boden and C.~L. Curtis, {\it The $SL(2,{\mathbb C})$ Casson
invariant for for Dehn surgeries on two-bridge knots,}
Alg. Geom. Topol. {\bf 12} (2012), no. 4, 2095--2126. \MR{3020202}


\bibitem{BZ01}
S. Boyer and X. Zhang,
{\it A proof of the finite filling conjecture,}
J. Diff. Geom. {\bf 59} (2001), 87--176. \MR{1909249 (2003k:57007)}

\bibitem{Ch}
E. Chesebro,
{\it  Closed surfaces and character varieties,}
Alg. Geom. Topol. {\bf 13} (2013), 2001--2037. \MR{3073906} 

\bibitem{CCGLS}
D. Cooper, M. Culler, H. Gillet, D. D. Long, and P. B. Shalen,
{\it  Plane curves associated to character varieties of 3-manifolds,}
Invent. Math. {\bf 118} (1994),   47--84. \MR{1288467 (95g:57029)}

\bibitem{CL96}
D. Cooper and D. D. Long,
{\it Remarks on the $A$-polynomial of a knot,}
J. Knot Theory Ramific. {\bf 5} (1996), 609--628. \MR{1414090 (97m:57004)}

\bibitem{CL97}
D. Cooper and D. D. Long,
{\it The $A$-polynomial has ones in the corners,}
Bull. Lond. Math. Soc. {\bf 29} (1997), 231--238. \MR{1426004 (98f:57026)}

\bibitem{CGLS}
M. Culler, C. McA. Gordon, J. Luecke, and P. B. Shalen,
{\it  Dehn surgery on knots,}
Annals Math. {\bf 125} (1987), 237--300. \MR{0881270 (88a:57026)}

\bibitem{CS83}
M. Culler and P. B. Shalen,
{\it Varieties of group representations and splittings of 3-manifolds,}
Annals Math. {\bf 117} (1983), 109--146. \MR{0683804 (84k:57005)}

\bibitem{C01}
C. L. Curtis, {\it  An intersection theory count of the
$\SLC$-representations of the fundamental group of a 3-manifold,}
Topology {\bf 40} (2001), 773--787. \MR{1851563 (2002k:57022)}

\bibitem{C03}
C.~L. Curtis, {\it Erratum to ``An intersection theory count of the
$\SLC$-representations of the fundamental group of a 3-manifold,"}
Topology {\bf 42} (2003), 929. \MR{1851563 (2002k:57022)}

\bibitem{F69} W. Fulton, Algebraic Curves. An Introduction to Algebraic Geometry, Notes written with the collaboration of Richard Weiss, Mathematics Lecture Notes Series (W. A. Benjamin, New York, 1969). \MR{0313252 (47 \#1807)}


\bibitem{F84}
W. Fulton, Intersection Theory, Ergebnisse der Mathematik und ihrer Grenzgebiete (3), 2. Springer-Verlag, Berlin (1984). \MR{0732620 (85k:14004)}

\bibitem{JM}
D. Johnson and J. Millson,  Deformation spaces associated to compact hyperbolic manifolds, {\it Discrete groups in geometry and analysis} (New Haven, Conn., 1984), 48-Ð106, Progr. Math., {\bf 67}, Birkh\"{a}user Boston, 1987. \MR{0900823 (88j:22010)}

\bibitem{KM}
M. Kapovich and J. Millson,  {\it On representation varieties of 3-manifold groups,} 2013 preprint, {\tt arxiv.org/pdf/1303.2347.pdf}

\bibitem{K93}
E.~P. Klassen, {\it Representations in $SU(2)$ of the fundamental groups of the Whitehead link and of doubled knots,}
Forum Math. {\bf 5} (1993), 93--109. \MR{1205249 (94d:57002)}

\bibitem{L93} T. Le,
{\it  Varieties of representations and their cohomology-jump subvarieties for knot groups,}
(Russian) Mat. Sb. {\bf 184} (1993), no. 2, 57--82; translation in Russian Acad. Sci. Sb. Math. {\bf 78} (1994), no. 1, 187Ð209. \MR{1214944 (94a:57016)}

\bibitem{L06} T. Le, {\em The colored Jones polynomial and the $A$-polynomial of knots,}
Adv. Math. {\bf 207} (2006), no. 2, 782-Ð804. \MR{2271986 (2007k:57021)}

\bibitem{LT11}
T.~K.~T. Le and A.~T. Tran, {\it On the AJ conjecture for knots,}
2011 preprint, {\tt arxiv.org/pdf/1111.5258v4.pdf}. 

\bibitem{M10} J. March\'{e}, {\it The skein module of torus knots,}
Quantum Topol. {\bf 1} (2010), no. 4, 413Ð-421. \MR{2733247 (2011k:57019)}

\bibitem{Sf94}
I. R. Shafarevich,
{\em Basic algebraic geometry I},  Varieties in projective space. Second edition. Springer-Verlag, Berlin, 1994. \MR{1355542}

\bibitem{Sh02}
P. Shalen,  Representations of 3-manifold groups,
{\it Handbook of Geometric Topology,} 955--1044 
North Holland, Amsterdam (2002). \MR{1886685 (2003d:57002)}


\bibitem{Si12} A.~S. Sikora, {\it Character varieties,} Trans. Amer. Math. Soc. {\bf 364} (2012), no. 10, 5173Ð-5208. \MR{2931326}

\bibitem{sage} W.~A. Stein et al., {\it Sage Mathematics Software (Version 4.8),}
   The Sage Development Team, 2012, {\tt www.sagemath.org.}

\bibitem{T13} A.~T. Tran, {\em The universal character ring of the $(-2,2m+1,2n)$-pretzel link,} Internat. J. Math. {\bf 24} (2013), no. 8, 1350063, 13 pp. \MR{3103879}
 
\end{thebibliography}
\end{document}